\numberwithin{equation}{section}
\theoremstyle{plain}
\newtheorem{theorem}{Theorem}[section]
\newtheorem*{theorem*}{Theorem}
\newtheorem{proposition}[theorem]{Proposition}
\newtheorem*{proposition*}{Proposition}
\newtheorem{lemma}[theorem]{Lemma}
\newtheorem*{lemma*}{Lemma}
\newtheorem{corollary}[theorem]{Corollary}
\newtheorem*{corollary*}{Corollary}
\newtheorem*{theorem:dPV}{Theorem (dPV)}
\newtheorem*{theorem:dPVI}{Theorem (dPVI)}
\newtheorem*{proposition:isomonodromy}{Proposition (isomonodromy transformation)}
\theoremstyle{definition}
\newtheorem{definition}[theorem]{Definition}
\newtheorem*{definition*}{Definition}
\newtheorem*{notation*}{Notation}
\theoremstyle{remark}
\newtheorem{remark}[theorem]{Remark}
\newtheorem*{remark*}{Remark}
\newtheorem*{remarks*}{Remarks}
\newtheorem{example}[theorem]{Example}
\newtheorem*{example*}{Example}
\newcommand\eqd{:=}                     
\newcommand\p[1]{{\mathbb{P}^{\mathbf{#1}}}}        
\newcommand\iso{{\ \widetilde\to\ }}                
\newcommand\isorat{{\stackrel\sim\dashrightarrow}}     
\newcommand\mtop[1]{\mathop{\mathrm{#1}}\nolimits}
\newcommand{\sing}{\mtop{Sing}}             
\newcommand{\tr}{\mtop{tr}}                 
\newcommand{\im}{\mtop{im}}                 
\newcommand{\res}{\mtop{res}}                   
\newcommand{\diag}{\mtop{diag}}             
\newcommand{\detrg}{\mtop{detR\Gamma}}
\newcommand\dotminus{\mathop{\dot{-}}}
\newcommand\C{\mathbb{C}}
\newcommand\Z{\mathbb{Z}}
\newcommand\cF{{\mathfrak F}}
\newcommand\cG{{\mathfrak G}}
\newcommand\cX{{\mathfrak X}}
\newcommand\cY{{\mathfrak Y}}
\newcommand\tA{{\widetilde A}}
\newcommand\vL{{\mathcal L}}
\newcommand\vA{{\mathcal A}}
\newcommand\vR{{\mathcal R}}
\newcommand\vO{{\mathcal O}}
\author{D.~Arinkin, A.~Borodin}
\title[Tau-function of discrete isomonodromy transformations]{Tau-function of discrete isomonodromy transformations
 and probability}
\date{April 17, 2007}
\begin{document}
\maketitle
\begin{abstract}
We introduce the $\tau$-function of a rational d-connection and its
isomonodromy transformations. We show that in a continuous limit our
$\tau$-function agrees with the Jimbo-Miwa-Ueno $\tau$-function,
compute the $\tau$-function for the isomonodromy transformations
leading to difference Painlev\'e V and difference Painlev\'e VI
equations, and prove that the gap probability for a wide class of
discrete random matrix type models can be viewed as the
$\tau$-function for an associated d-connection.

\end{abstract}

\section*{Introduction}

The theory of isomonodromy deformations of rational connections over
$\p1$ has a long history. It was pioneered in the beginning of the
twentieth century by R.~Fuchs and L.~Schlesinger, and after being
dormant for fifty years, it sprang back to life with the work of
M.~Jimbo, T.~Miwa, Y.~M\^ori, M.~Sato, T.~Ueno and other members of
the famous Kyoto school in the late seventies. Since then the theory
found a number of applications in statistical physics (see e.g. a
series of papers on holonomic quantum fields by the Kyoto school),
random matrix theory (see e.g. \cite{JMMS}, \cite{TW}, \cite{Pal},
\cite{HI}, \cite{BD}), theory of Frobenius manifolds (see
\cite{Du}), and representation theory (see \cite{BD}).

A central role in the theory of isomonodromy deformations is played
by the so-called {\it $\tau$-function} --- a holomorphic function on
the universal covering space of the space of parameters of the
connection, which vanishes when the corresponding isomonodromy
deformation fails to exist. The isomonodromy $\tau$-function was
initially introduced and studied by M.~Jimbo, T.~Miwa, and T.~Ueno
in \cite{JMU}, \cite{JMII}, \cite{JMIII}. It found various
interpretations in applications; e.g. in random matrix theory the
$\tau$-function appears as the {\it gap probability\/} --- the
probability that no eigenvalues of the random matrix are present in
a union of intervals. This fact can be seen as one reason why the
gap probabilities for one-interval gaps are often expressible
through solutions of the classical Painlev\'e equations, see e.g.
\cite{JMMS}, \cite{Me}, \cite{TW}, \cite{AvM1}, \cite{BD},
\cite{HI}, \cite{FW1}-\cite{FW4} for details.

The theory of isomonodromy transformations of {\it difference\/}
rational connections (d-connections, for short) on $\p1$ is much
younger. It was suggested by one of the authors in \cite{B2} and
employed in \cite{B1}, \cite{BB}, \cite{K}, \cite{AB}, \cite{Sa2}.
There are presently two principal applications of the theory: On the
one hand, isomonodromy transformations of d-connections with few
singularities provide a key for understanding the geometry of
discrete Painlev\'e equations from Sakai's hierarchy (see \cite{Sa1}
for the hierarchy and \cite{AB}, \cite{Sa2} for explicit
connections). On the other hand, the discrete isomonodromy
transformations can be used to compute the gap probabilities in
various discrete probabilistic models of random matrix type, see
\cite{B1}, \cite{BB}.

The main goal of this paper is to introduce the notion of the
$\tau$-function of a rational d-connection and its isomonodromy
transformations. We also show that in a continuous limit our
$\tau$-function agrees with the conventional one; we compute the
$\tau$-function for the isomonodromy transformations leading to
difference Painlev\'e V ($dPV$) and difference Painlev\'e VI
$(dPVI)$ equations (in the terminology of \cite{AB}), and we prove
that the gap probability for a wide class of discrete random matrix
type models can be viewed as the $\tau$-function for an associated
d-connection.

Let us describe our results in more detail.

Let $\vL$ be a vector bundle on $\p1$ of rank $m$. Define a
one-dimensional vector space $\detrg(\vL)$ by
\begin{equation*}
\detrg(\vL)=\det(H^0(\p1,\vL))\otimes(\det(H^1(\p1,\vL)))^{-1}.
\end{equation*}
Recall that $H^0(\p1,\vL)$ is the space of global regular sections
of $\vL$, and $H^1(\p1,\vL)$ can be interpreted as the space of
obstructions for a Mittag-Leffler problem, see Section
\ref{sc:detrg} for details. Both $H^0(\p1,\vL)$ and $H^1(\p1,\vL)$
are finite-dimensional.

In a sense, $\detrg(\vL)$ is the only nontrivial way to associate to
a vector bundle $\vL$ a one-dimensional vector space. More
precisely, we can view $\detrg$ as a line bundle on the moduli space
of vector bundles on $\p1$, and any other line bundle is its tensor
power (see \cite{LS} and references therein for the statement and
its generalizations).

The definition of $\detrg$ makes sense (and is widely used) in a
much more general situation; one description can be found in
\cite{KM}.

Observe that if $\vL\simeq (\vO(-1))^m$ then
$H^0(\p1,\vL)=H^1(\p1,\vL)=0$, so $\detrg(\vL)=\C$ and
$\detrg(\vL)^{-1}=\C$. In particular, there is a canonical element
$1\in\detrg(\vL)^{-1}$.

\begin{definition*} Suppose $\vL$ has
slope $-1$; that is, $\deg(\vL)=-m$. We define
$\tau(\vL)\in\detrg(\vL)^{-1}$ by
\begin{equation*}
\tau(\vL)=\begin{cases}1\quad\text{if }\vL\simeq(\vO(-1))^m,\cr
0\quad\text{otherwise.}\end{cases}
\end{equation*}
\end{definition*}

By itself, the element $\tau(\vL)\in\detrg(\vL)^{-1}$ provides
almost no meaningful information. However, if $\vL$ is equipped with
an additional structure, the derivatives of $\tau$ might be
meaningful. More precisely, given a d-connection on $\vL$ (of a
certain kind), we have a sequence of `modifications'
$\{\vL_n\}_{n\in\Z}$ and a canonical isomorphism
$\detrg(\vL_{n+1})\iso\detrg(\vL_n)\otimes S$, where $\vL_n$ is a
vector bundle on $\p1$, $\vL_0=\vL$, and  $S$ is a one-dimensional
vector space that does not depend on $n$. Therefore, the first ratio
$\tau(\vL_{n+1})/\tau(\vL_n)$ is
 a functional on $S$, while the second ratio
$$\frac{\tau(\vL_n)\tau(\vL_{n+2})}{\tau^2(\vL_{n+1})}$$
is a number (assuming that the denominator is nonzero).

All the modifications $\vL_n$ are equipped with d-connections, which
can be viewed as `isomonodromy transformations' of the initial
d-connection on $\vL_0$. Explanations of the term can be found in
\cite{B2}, \cite{K}.

This paper is organized as follows.

 Sections \ref{sc:mods} contains general definitions.

In Section \ref{sc:coordinates} we explicitly compute the ratios of
the $\tau$-function for modifications of three types: when two
simple zeroes of $\vA(z)$ shift in different directions, when a
simple zero and a simple pole shift in the same direction, and when
the shifting simple zero and simple pole coalesce. Note that these
are the simplest modifications that preserve the degree of $\vL$.

In Section \ref{sc:limit} we consider a limit transition that turns
a d-connection into an ordinary connection. We verify that the
second difference logarithmic derivatives of our $\tau$-function
converge to the second logarithmic derivatives of the conventional
isomonodromy $\tau$-function for the limiting connection, see
Theorem \ref{th:lim}. It is worth pointing out that in the
continuous situation the definition of the $\tau$-function
prescribes its first logarithmic derivatives rather than the second
ones. However, in the difference situation the first derivatives are
defined only up to a constant, and we were unable to find a natural
way to fix this constant.

In Section \ref{sc:Painleve} we compute the second ratios for
$\tau$-functions of isomonodromy transformations that reduce to
$dPV$ and $dPVI$ equations. The resulting expressions, see Theorems
\ref{th:taudPV}, \ref{th:taudPVI}, are surprisingly simple, and they
should be viewed as functions on the corresponding moduli spaces of
d-connections. The zeroes and poles of these second ratios show when
modifications of the corresponding d-connection lead to a nontrivial
vector bundle.

Section \ref{sc:probability} is dedicated to evaluating gap
probabilities for discrete biorthogonal random matrix type ensembles
associated with multiple orthogonal polynomials of mixed type in the
sense of \cite{DK}. This is a broad class of measures that naturally
appears in a variety of domains of mathematics including enumerative
combinatorics, tiling models, models of random growth, etc. In
Theorem \ref{th:det=tau} we prove that if the difference logarithmic
derivatives of all the relevant weight functions are rational, then
there exists a vector bundle with a rational d-connection such that
the first difference logarithmic derivatives of its $\tau$-function
(correctly defined because of certain explicit choices we make)
coincide with those of the gap probabilities for the biorthogonal
ensemble.

The final Section \ref{sc:Hahn} provides an example: We deal with
the Hahn orthogonal polynomial ensemble that comes up naturally in
the statistical description of tiling of a hexagon by rhombi (see
\cite{Joh}) and in harmonic analysis on the infinite-dimensional
unitary group (see \cite{BO}). Using the results of Sections
\ref{sc:Painleve} and \ref{sc:probability}, we show that the
one-interval gap probability for the Hahn ensemble is expressible
through a solution of the $dPVI$ equation, see Theorem
\ref{th:Hahn}. Even though a variety of results of this type are
known, see \cite{B1}, \cite{Ba}, \cite{AvM2}, \cite{FW1}-\cite{FW4},
\cite{BB}, it is the first time that such a result involves a
discrete Painlev\'e equation that is so high in Sakai's hierarchy.
Also, this is apparently the simplest example of a model that
probably cannot be handled using the isomonodromy deformations of
usual connections because $dPVI$ cannot be viewed as a symmetry of a
differential Painlev\'e equation.

\subsection*{Acknowledgements} The authors are very grateful to
Pierre Deligne for illuminating discussions. We are particularly
indebted to him for interpreting the $\tau$-function via determinant
of cohomology.

The second author (A.~B.) was partially supported by the NSF grant
DMS-0402047.

\section{Modifications and $\detrg$}\label{sc:mods}

\subsection{}
Let $\vL$ be a vector bundle on $\p1$ of rank $m$.

\begin{definition} A (rational) \emph{d-connection} on $\vL$ is a linear
operator
\begin{equation*}
\vA(z):\vL_z\to\vL_{z+1}
\end{equation*}
that depends on a point $z\in\p1-\{\infty\}$ in a rational way (in
particular, $\vA(z)$ is defined for all $z\in\C$ outside of a finite
set); here $\vL_z$ is the fiber of $\vL$ over $z\in\p1$. In other
words, $\vA$ is a rational map between the vector bundle $\vL$ and
its pullback via the automorphism $\p1\to\p1$ that sends $z\mapsto
z+1$. \label{df:dconn}
\end{definition}

\begin{definition}
We say that a point $z_0\in\p1$ is a \emph{pole} of $\vA$ if
$\vA(z)$ is not regular at $z=z_0$. We say that $z_0\in\p1$ is a
\emph{zero} of $\vA$ if the map
\begin{equation*}
\vA^{-1}(z):\vL_{z+1}\to\vL_z
\end{equation*}
is not regular at $z=z_0$. Note that $\vA$ can have a zero and a
pole at the same point.
\end{definition}

Denote by $\sing(\vA)\subset\C$ the set of all zeroes and poles of
$\vA$ on $\C=\p1-\{\infty\}$.

\begin{example} Suppose that $\vA$ has no pole at $x\in\C$ and that
$\det(\vA)$ has a simple zero at $x$. Obviously $x$ is a zero of
$\vA$. We will say that $x$ is a \emph{simple} zero.

Dually, suppose $\vA$ has no zero at $x\in\C$ and $\det(\vA)$ has
a simple pole at $x$. Then $x$ is a pole of $\vA$; we say that $x$
is a \emph{simple} pole.
\end{example}

\subsection{}

\begin{definition}
Suppose $\vR:\vL\isorat\vL'$ is a rational isomorphism between two
vector bundles $\vL$ and $\vL'$ on $\p1$. We call $\vL'$ a
\emph{modification} of $\vL$ (of course, $\vL$ is also a
modification of $\vL'$). If $\vR$ is a regular map (that is, it
has no poles, but it might have zeroes), we say that $\vL'$ is an
\emph{upper} modification of $\vL$ and $\vL$ is a \emph{lower}
modification of $\vL'$. For a fixed finite set $S\subset\p1$, we
say that $\vL'$ is a \emph{modification of $\vL$ on $S$} if
$\vR(z)$ and $\vR^{-1}(z)$ are regular outside of $S$.

A d-connection $\vA$ on $\vL$ induces a d-connection $\vA'$ on
$\vL'$. We call $\vA'$ a \emph{modification} of $\vA$.
\end{definition}

\begin{remark} Modifications can be viewed as an isomonodromy
deformation in the sense of \cite{B2}, see also \cite{K}. Indeed,
the monodromies of the difference equations associated with $\vA$
and $\vA'$ coincide (for the monodromies to exist, $\vA$ and $\vA'$
have to satisfy certain non-degeneracy conditions).
\end{remark}

\begin{example}  Suppose $\vR:\vL\isorat\vL'$ is regular and
$\det(\vR)$ has exactly one simple zero at $x\in\C$. In this case,
$\vL'$ is an \emph{elementary upper modification} (at $x$) of
$\vL$, and $\vL$ is an \emph{elementary lower modification} (at
$x$) of $\vL'$.

An elementary upper modification $\vR:\vL\to\vL'$ at $x$ is uniquely
determined by a one-dimensional subspace $l\subset\vL_x$ given by
$l=\ker(\vR(x):\vL_x\to\vL'_x)\subset\vL_x$. Conversely, any
one-dimensional $l\subset\vL_x$ defines an elementary upper
modification at $x$.

Dually, elementary lower modifications of $\vL'$ at $x$ are in
one-to-one correspondence with subspaces $l'\subset\vL'_x$ of
codimension one. (For $\vR:\vL\to\vL'$, we set
$l'=\im(\vR(x):\vL_x\to\vL'_x)$.) \label{ex:elementary}
\end{example}

\subsection{}
Let $\vA$ be a d-connection on $\vL$, and suppose
$x\in\sing(\vA)$. Then there exists a unique modification
$\vA^{\{x\}}$ of $\vA$ at $x$ such that $x$ is not a singular
point of $\vA^{\{x\}}$. Also, there exists a unique modification
$\vA_{\{x+1\}}$ of $\vA$ at $x+1$ such that $x$ is not a singular
point of $\vA_{\{x\}}$.

\begin{lemma} Suppose $x-1\not\in\sing(\vA)$.
\begin{enumerate}
\item $\sing(\vA^{\{x\}})=(\sing(\vA)\setminus\{x\})\cup\{x-1\}$;

\item $\vA$ is the unique modification of $\vA^{\{x\}}$ at $x$
with no singularity at $x-1$. That is,
$\vA=(\vA^{\{x\}})_{\{x\}}$.
\end{enumerate}
Dually, suppose $x+1\not\in\sing(\vA)$.
\begin{enumerate}
\setcounter{enumi}{2} \item
$\sing(\vA_{\{x+1\}})=(\sing(\vA)\setminus\{x\})\cup\{x+1\}$;

\item $\vA$ is the unique modification of $\vA_{\{x+1\}}$ at $x$
with no singularity at $x+1$. That is,
$\vA=(\vA_{\{x+1\}})^{\{x+1\}}$.
\end{enumerate}
\qed \label{lm:modif}
\end{lemma}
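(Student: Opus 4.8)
The plan is to reduce all four assertions to the single local observation that a point $y\in\C$ lies in $\sing(\vA)$ if and only if the fiber map $\vA(y)\colon\vL_y\to\vL_{y+1}$ fails to be an isomorphism; equivalently, in a local coordinate at $y$, the Smith (elementary-divisor) normal form of the rational matrix $\vA(z)$ over the local ring $\vO_y$ has a nonzero exponent. Fixing once and for all trivializations of the relevant bundles over $\C=\p1\setminus\{\infty\}$, a modification $\vR\colon\vL\isorat\vL'$ becomes a rational matrix $\vR(z)$, and the induced d-connection is $\vA'(z)=\vR(z+1)\,\vA(z)\,\vR(z)^{-1}$. Since a modification at $x$ has $\vR(z)$ regular and invertible away from $z=x$, the factor $\vR(z)^{-1}$ can alter the Smith form of $\vA(z)$ only at $z=x$, while $\vR(z+1)$ can alter it only at $z=x-1$; at every other point both factors lie in $\mathrm{GL}_m(\vO_y)$ and leave the Smith form unchanged. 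Hence $\sing(\vA')$ and $\sing(\vA)$ agree outside $\{x,x-1\}$, and it remains to analyze these two points.

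For (1), recall that $\vA^{\{x\}}$ is defined so that $x\notin\sing(\vA^{\{x\}})$. In lattice terms, removing the singularity at $x$ is achieved by replacing the lattice of $\vL$ at $x$ by the $\vA$-preimage of the target lattice (the lattice of $\vL$ at $x+1$); this lattice is uniquely determined, and it differs from the original precisely because $x\in\sing(\vA)$, so the modifying matrix is genuinely non-invertible at $x$. I would then read off the effect at $x-1$: there $\vA^{\{x\}}(z)$ equals $\vR(z+1)\,\vA(z)$ up to a factor in $\mathrm{GL}_m(\vO_{x-1})$, and since $x-1\notin\sing(\vA)$ the connection $\vA$ already identifies the two lattices near $z=x-1$, so post-composing with the nontrivial modification $\vR(z+1)$ breaks this identification. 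Thus $x-1\in\sing(\vA^{\{x\}})$, and combined with the previous paragraph this yields $\sing(\vA^{\{x\}})=(\sing(\vA)\setminus\{x\})\cup\{x-1\}$.

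For (2), the operation $(\vA^{\{x\}})_{\{x\}}$ is by definition the unique modification of $\vA^{\{x\}}$ at $x$ that removes the singularity at $x-1$. Since any modification at $x$ leaves the generic rational map $\vA\otimes\C(z)$ unchanged and $x-1\notin\sing(\vA)$, the unique lattice at $x$ restoring an isomorphism at $x-1$ is the original lattice of $\vL$; the modifying matrix is then $\vR(z)^{-1}$ and $(\vA^{\{x\}})_{\{x\}}=\vA$. The uniqueness assertion is exactly the uniqueness of this lattice. Assertions (3) and (4) are the mirror images obtained by interchanging the roles of source and target (equivalently, by applying (1)--(2) to the inverse d-connection together with the shift $z\mapsto z+1$): a modification at $x+1$ affects only the points $x+1$ and $x$; removing the singularity at $x$ forces a nontrivial lattice change that creates one at $x+1$, and the inverse modification recovers $\vA$.

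The main obstacle is the bookkeeping in the localization step: one must keep careful track of the two distinct lattices compared by $\vA$ at each point --- the source $\vL_z$ and the target $\vL_{z+1}$, i.e.\ the pullback of $\vL$ under $z\mapsto z+1$ --- and verify that conjugation by $\vR(z+1)$ and $\vR(z)^{-1}$ really does confine all changes to $x-1$ and $x$ respectively. Once this localization is established, each of the four claims reduces to the elementary facts that composing an isomorphism of fibers with a nontrivial modification produces a non-isomorphism, and that the lattice undoing a given modification is unique.
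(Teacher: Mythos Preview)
The paper does not give a proof of this lemma; the \texttt{\textbackslash qed} immediately following the statement indicates that the authors regard it as an immediate consequence of the definitions. Your argument is correct and is exactly the sort of localization check the authors had in mind: writing the modified connection as $\vA'(z)=\vR(z+1)\vA(z)\vR(z)^{-1}$ with $\vR$ and $\vR^{-1}$ regular invertible away from $x$ confines all changes to the Smith data at $z=x$ and $z=x-1$; nontriviality of the modification (forced by $x\in\sing(\vA)$) then creates the singularity at $x-1$, and the uniqueness of the lattice restoring regularity at $x-1$ gives $(\vA^{\{x\}})_{\{x\}}=\vA$. There is no alternative approach in the paper to compare with.
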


\begin{example}
If $\vA$ has a simple zero at $x$, then $\vA^{\{x\}}$ is an
elementary upper modification of $\vA$ at $x$. This modification
corresponds to the one-dimensional subspace
$l=\ker(\vA(x):\vL_x\to\vL_{x+1})$ in the sense of Example
\ref{ex:elementary}.

Dually, if $\vA$ has a simple pole at $x$, then $\vA^{\{x\}}$ is
the elementary lower modification of $\vA$ at $x$ corresponding to
the codimension one subspace $\im(\vA^{-1}(x):\vL_{x+1}\to\vL_x)$.
\label{ex:simplesingularity}
\end{example}

\subsection{}\label{sc:detrg} Consider cohomology spaces
$H^0(\p1,\vL)$ and $H^1(\p1,\vL)$. They have the following classical
interpretation: Fix any non-empty finite set $\frak S\subset\p1$.
Consider the (infinite dimensional) vector space
$\Gamma(\vL(\infty\cdot \frak S))$ of rational sections of $\vL$
that are allowed to have poles of any order at the points of $\frak
S $. Consider also the vector space of polar parts for rational
sections of $\vL$ at the points of $\frak S$. It is natural to
denote the space by $\Gamma(\vL(\infty\cdot \frak S)/\vL)$.

The natural linear map
\begin{equation*}
\Gamma(\vL(\infty\cdot \frak S))\to\Gamma(\vL(\infty\cdot \frak
S)/\vL)
\end{equation*}
sends a rational function to its polar part. The kernel of this
map is identified with the space $H^0(\p1,\vL)$ of global regular
sections of $\vL$. The cokernel is identified with $H^1(\p1,\vL)$;
this corresponds to the interpretation of classes in
$H^1(\p1,\vL)$ as obstructions for a Mittag-Leffler problem. Both
$H^0(\p1,\vL)$ and $H^1(\p1,\vL)$ are finite-dimensional.

\begin{notation*} For a finite-dimensional vector space $V$,
$\det(V)$ denotes the top exterior power of $V$. In particular,
$\det(0)=\C$. If $\dim(V)=1$, the dual of $V$ is denoted by
$V^{-1}$.
\end{notation*}

\begin{definition}
 Define a one-dimensional
vector space $\detrg(\vL)$ by
\begin{equation*}
\detrg(\vL)=\det(H^0(\p1,\vL))\otimes(\det(H^1(\p1,\vL)))^{-1}.
\end{equation*}
\end{definition}

For instance, suppose that $\vL\simeq (\vO(-1))^m$. Then
$H^0(\p1,\vL)=H^1(\p1,\vL)=0$, so $\detrg(\vL)=\C$ and
$\detrg(\vL)^{-1}=\C$. In particular, there is a canonical element
$1\in\detrg(\vL)^{-1}$.

\begin{definition} Suppose that a rank $m$ vector bundle $\vL$ has
slope $-1$; that is, $\deg(\vL)=-m$. We define
$\tau(\vL)\in\detrg(\vL)^{-1}$ by
\begin{equation*}
\tau(\vL)=\begin{cases}1\quad\text{if }\vL\simeq(\vO(-1))^m,\cr
0\quad\text{otherwise.}\end{cases}
\end{equation*}
\end{definition}

\begin{example} \label{ex:modification and detRG}
Suppose $\vL$ has slope $-1$. Let $\vL'$ be an arbitrary upper
modification of $\vL$. The quotient $\vL'/\vL$ is supported at
finitely many points (the zeroes of the map $\vL\to\vL'$) and its
space of global sections $H^0(\p1,\vL'/\vL)$ is finite-dimensional.
The long exact sequence corresponding to the sequence
\begin{equation*}
0\to\vL\to\vL'\to\vL'/\vL\to0
\end{equation*}
induces an identification between $\detrg(\vL)$ and
$\det(H^0(\p1,\vL'))\otimes\det(H^0(\p1,\vL'/\vL))^{-1}$.

Consider now the natural map
\begin{equation*}
q:H^0(\p1,\vL')\to H^0(\p1,\vL'/\vL).
\end{equation*}
Its determinant
$$\det(q)\in\det(H^0(\p1,\vL'/\vL))\otimes\det(H^0(\p1,\vL'))^{-1}=\detrg(\vL)^{-1}$$
equals $\tau(\vL)$. In particular, $q$ is an isomorphism if and only
if $\vL\simeq(\vO(-1))^m$.
\end{example}

\begin{example} Let $\vL_2$ be a modification of
$\vL_1$. Suppose that $\vL_2$ has slope $-1$ and
$\vL_1\simeq(\vO(-1))^m$. One can choose an upper modification
$\vL'$ of $\vL_1$ that is also an upper modification of $\vL_2$.
Consider the ratio
\begin{equation*}
\frac{\tau(\vL_2)}{\tau(\vL_1)}\in\detrg(\vL_1)\otimes\detrg(\vL_2)^{-1}.
\end{equation*}
By the previous example, we can identify
$$\detrg(\vL_1)\otimes\detrg(\vL_2)^{-1} \text{ and }
\det(H^0(\p1,\vL'/\vL_2))\otimes\det(H^0(\p1,\vL'/\vL_1))^{-1}.$$
Under this identification, the ratio corresponds to the
determinant of the composition
\begin{equation}
H^0(\p1,\vL'/\vL_1)\iso H^0(\p1,\vL')\iso H^0(\p1,\vL'/\vL_2).
\label{eq:comp}
\end{equation}
\label{ex:ratio}
\end{example}

\section{Ratios of the $\tau$-function}\label{sc:coordinates}

In this section, we study $\tau$-functions of vector bundles with
d-connections. First, we consider d-connections with arbitrary
singularities, and then look at three special cases.

\subsection{}\label{sc:ratios}
Let $\vL$ be a rank $m$ vector bundle and $\vA$ be a d-connection on
$\vL$. Suppose that $\vA$ has singularities at $n$ distinct points
$a_1,\dots,a_n$ and no singularities at $a_i+k$ for $k\in\Z-\{0\}$,
$i=1,\dots,n$. We impose no restrictions on the behavior of $\vA$
elsewhere.

Consider the lattice
\begin{equation*}
\Lambda\eqd(a_1,\dots,a_n)+\Z^n\subset\C^n.
\end{equation*}
Fix $u=(u_1,\dots,u_n)\in\Lambda$. By virtue of Lemma
\ref{lm:modif}, there exists a unique modification $\vL_u$ of $\vL$
at a subset of $\Lambda$ such that the d-connection $\vA_u$ on
$\vL_u$ satisfies
\begin{equation*}
(\sing(\vA_u)=\sing(\vA)\setminus\{a_1,\dots,a_n\})\cup\{u_1,\dots,u_n\}.
\end{equation*}
In other words, the singularities at $a_i$'s are shifted to $u_i$'s.

For $u\in\Lambda$,
$$\deg(\vL_u)=\deg(\vL)-\sum\kappa_i(u_i-a_i),$$
where $\kappa_i$ is the order of zero of $\det(\vA)$ at $a_i$
($\kappa_i$ can be negative). Consider the sublattice
\begin{equation*}
\Lambda_{-m}\eqd\{u\in\Lambda:\deg(\vL_u)=-m\}\subset\Lambda.
\end{equation*}

Set $T_u\eqd\detrg(\vL_u)^{-1}$. This is a one-dimensional vector
space depending on $u\in\Lambda$; if $u\in\Lambda_{-m}$, we have a
natural element $\tau(\vL_u)\in T_u$.

Note that according to our definition, $\tau$ is not a function on
$\Lambda_{-m}$, because its value belongs to a one-dimensional
vector space that has no natural basis. Nevertheless, it turns out
that the `second logarithmic derivative' of $\tau$ makes sense as a
function on $\Lambda_{-m}$. Let us make the statement precise:

\begin{proposition} For $i=1,\dots,n$, let $e_i\in\Z^n$ be the
$i$th standard basis vector. Then the `first derivative'
\begin{equation*}
S^{(i)}_u\eqd T_u\otimes(T_{u-e_i})^{-1}
\end{equation*}
does not depend on $u\in\Lambda$. That is, there exists a canonical
isomorphism $S^{(i)}_u\iso S^{(i)}_{v}$ for any $u,v\in\Lambda$.
\label{pp:derivative}
\end{proposition}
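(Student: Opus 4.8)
The plan is to prove that $S^{(i)}_u = T_u \otimes (T_{u-e_i})^{-1}$ is independent of $u$ by showing that the shift $u \mapsto u - e_j$ in a direction $j \neq i$ induces a canonical isomorphism $S^{(i)}_u \iso S^{(i)}_{u-e_j}$, and that these isomorphisms are compatible. Since $\Lambda$ is generated by the vectors $e_1, \dots, e_n$ acting by translation, and $S^{(i)}_u$ by definition already accounts for the $e_i$-direction (the defining tensor product telescopes along that axis), it suffices to produce canonical identifications $T_{u} \otimes (T_{u-e_i})^{-1} \iso T_{u-e_j} \otimes (T_{u-e_i-e_j})^{-1}$ for each $j$. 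Equivalently, writing out $T_u = \detrg(\vL_u)^{-1}$, I must exhibit a canonical isomorphism
\begin{equation*}
\detrg(\vL_u)^{-1} \otimes \detrg(\vL_{u-e_i}) \iso \detrg(\vL_{u-e_j})^{-1} \otimes \detrg(\vL_{u-e_i-e_j}),
\end{equation*}
that is, a canonical isomorphism between $\detrg(\vL_{u}) \otimes \detrg(\vL_{u-e_i-e_j})$ and $\detrg(\vL_{u-e_i}) \otimes \detrg(\vL_{u-e_j})$.

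The key geometric input is that the four bundles $\vL_u$, $\vL_{u-e_i}$, $\vL_{u-e_j}$, $\vL_{u-e_i-e_j}$ differ only by elementary modifications at the single points governed by the $i$-th and $j$-th singularities, and crucially these modifications take place at \emph{distinct} points on $\p1$ (since $a_i + \Z$ and $a_j + \Z$ are disjoint for $i \neq j$, by the hypothesis that the $a_i$ are distinct modulo $\Z$). First I would use Lemma \ref{lm:modif} to see that passing from $\vL_u$ to $\vL_{u-e_i}$ is a modification supported on the fiber over a point in $a_i + \Z$, while passing to $\vL_{u-e_j}$ is supported over a point in $a_j + \Z$; because these supports are disjoint, the two modifications \emph{commute}, and the four bundles sit in a commutative square of modifications. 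Then I would invoke the additivity of $\detrg$ under short exact sequences (the same long-exact-sequence mechanism exhibited in Example \ref{ex:modification and detRG}): for a modification supported at disjoint points, the change in $\detrg$ splits as a tensor product of local contributions, each depending only on the fibers over the relevant point.

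Concretely, I would choose a common upper modification $\vL'$ dominating all four bundles (possible because the modifications are at disjoint points), and compute each $\detrg(\vL_\bullet)$ relative to $\vL'$ using Example \ref{ex:modification and detRG}: each is $\det(H^0(\p1,\vL'))$ tensored with an inverse determinant of sections of a skyscraper quotient $H^0(\p1, \vL'/\vL_\bullet)$. Because the supports over $a_i+\Z$ and $a_j+\Z$ are disjoint, each such skyscraper space factors as a direct sum of an $i$-part and a $j$-part, and the determinant factors accordingly. The alternating tensor combination $\detrg(\vL_u) \otimes \detrg(\vL_{u-e_i-e_j}) \otimes \detrg(\vL_{u-e_i})^{-1} \otimes \detrg(\vL_{u-e_j})^{-1}$ then cancels: the common $\det(H^0(\p1,\vL'))$ factors cancel by the alternating signs, and the local skyscraper determinants cancel because the $i$-part contributions in the two terms $\vL_u, \vL_{u-e_i}$ match those in $\vL_{u-e_j}, \vL_{u-e_i-e_j}$, and symmetrically for the $j$-part. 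This yields the desired canonical isomorphism, giving $S^{(i)}_u \iso S^{(i)}_{u-e_j}$.

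The main obstacle I anticipate is checking that these local isomorphisms are genuinely \emph{canonical} and \emph{compatible} — that the identification $S^{(i)}_u \iso S^{(i)}_v$ obtained by following any path from $u$ to $v$ through the lattice is independent of the path. This is a cocycle-type consistency condition, and it reduces to verifying that the commuting-square isomorphisms satisfy the obvious associativity and commutativity constraints. I expect this to follow formally from the functoriality of the long exact sequence and the fact that all modifications involved occur at pairwise distinct points (so there are no monodromy-type obstructions), but it is exactly the point where one must be careful rather than wave hands: the disjointness of the supports over $a_i + \Z$ for distinct $i$ is what makes the local contributions literally independent, and hence the path-independence automatic.
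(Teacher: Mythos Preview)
There is a genuine gap in the $e_i$-direction. You assert that ``$S^{(i)}_u$ by definition already accounts for the $e_i$-direction (the defining tensor product telescopes along that axis)'', but telescoping only gives
\[
T_u\otimes T_{u-ke_i}^{-1}\;=\;\bigotimes_{l=0}^{k-1} S^{(i)}_{u-le_i},
\]
which says nothing about whether the individual factors are canonically isomorphic. You must still exhibit $S^{(i)}_u\iso S^{(i)}_{u-e_i}$, and your disjoint-support argument does not do this. The modification $\vL_u\leadsto\vL_{u-e_i}$ lives at $u_i$, while $\vL_{u-e_i}\leadsto\vL_{u-2e_i}$ lives at $u_i-1$; after localizing as you propose, $S^{(i)}_u$ becomes a line determined by the singularity structure of $\vA_u$ at $u_i$, and $S^{(i)}_{u-e_i}$ by the singularity structure of $\vA_{u-e_i}$ at $u_i-1$. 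These are local data at \emph{different} points of $\p1$, and nothing in your argument relates them.

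What is missing---and what your proposal never invokes---is the d-connection itself. The paper's proof uses that $\vA_u(z)$ is regular and invertible near $z=u_i-1$, hence gives an isomorphism between $\vL_{u-e_i}$ (which equals $\vL_u$) in a neighborhood of $u_i-1$ and $\vL_u$ in a neighborhood of $u_i$; this transport carries the local modification data at $u_i-1$ to that at $u_i$ and furnishes the isomorphism $S^{(i)}_u\iso S^{(i)}_{u-e_i}$. Your argument for $j\ne i$ is fine and is a repackaging of the paper's observation that $\vL_u$ and $\vL_v$ agree near $u_i$ when $u_i=v_i$; but the $e_i$-shift is not a formality and cannot be done without $\vA$.
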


\begin{proof}
Fix $u=(u_1,\dots,u_n)\in\Lambda$ and $i=1,\dots,n$. The bundle
$\vL_{u-e_i}$ is a modification of $\vL_u$ at $u_i$; let us write it
as a combination of an upper modification $\vL_u\to\vL'_u$ and a
lower modification $\vL'_u\leftarrow\vL_{u-e_i}$ at $u_i$. As in
Example \ref{ex:ratio}, we have an isomorphism
\begin{equation}\label{eq:S}
S^{(i)}_u=\detrg(\vL_{u-e_i})\otimes\detrg(\vL)^{-1}=
\det(H^0(\p1,\vL'_u/\vL_u))\otimes\det(H^0(\p1,\vL'_u/\vL_{u-e_i}))^{-1}.
\end{equation}

From this description of $S^{(i)}_u$, one immediately gets an
identification $S^{(i)}_u\iso S^{(i)}_v$ provided $u$ and $v$ have
equal $i$th component. Indeed, $\vL_u$ and $\vL_v$ are naturally
identified in the neighborhood of $u_i=v_i$; denote the
identification by $\phi$. There is a unique upper modification
$\vL'_v$ of $\vL_v$ at $u_i$ such that $\phi$ identifies $\vL'_u$
and $\vL'_v$ near $u_i$. Then $\vL_{v-e_i}$ is a lower modification
of $\vL'_v$, and $\phi$ is also an isomorphism between $\vL_{u-e_i}$
and $\vL_{v-e_i}$ near $v_i$. Therefore, $\phi$ induces an
isomorphism between the right-hand side of \eqref{eq:S} and the
corresponding formula for $S^{(i)}_v$.

It remains to construct an isomorphism $S^{(i)}_u\iso
S^{(i)}_{u-e_i}$. For $z$ close to $u_i-1$, the map $\vA_u(z)$ is an
isomorphism between $\vL_{u-e_i}$ near $u_i-1$ and $\vL_u$ near
$u_i$. Indeed, $\vL_{u-e_i}$ coincides with $\vL_u$ near $u_i-1$,
while $\vA_u$ has no singularity at $z=u_i-1$. There is a unique
upper modification $\vL'_{u-e_i}$ of $\vL_{u-e_i}$ at $u_i-1$ such
that $\vA_u$ identifies $\vL'_{u-e_i}$ near $u_i-1$ with $\vL'_u$
near $u_i$. Then $\vL_{u-2e_i}$ is a lower modification of
$\vL'_{u-e_i}$, and $\vA_u$ is also an isomorphism between
$\vL_{u-2e_i}$ near $u_i-1$ and $\vL_{u-e_i}$ near $u_i$. Therefore,
$\vA_u$ induces an isomorphism between the right-hand side of
\eqref{eq:S} and the corresponding formula for $S^{(i)}_{u-e_i}$.

It is easy to see that the constructed isomorphisms $S^{(i)}_u\iso
S^{(i)}_v$ do not depend on the choice of the upper modification
$\vL_u\to\vL'_u$.
\end{proof}

Since $S^{(i)}_u$ does not depend on $u\in\Lambda$, we suppress the
index $u$ from now on.

Let us now define the ratios of $\tau$. Set
$$\Z^n_0=\{s=(s_1,\dots,s_n):\sum_{i=1}^n\kappa_is_i=0\}.$$ For any
$u\in\Lambda_{-m}$, $s\in\Z^n_0$, we have $u+s\in\Lambda_{-m}$.
Consider the ratio
$$D_s\tau(u)\eqd\frac{\tau(u+s)}{\tau(u)}\in T_{u+s}\otimes
T_u^{-1}=\bigotimes_i (S^{(i)})^{\otimes s_i}.$$ Now fixing
$t\in\Z^n_0$, we see that the second ratio
$$D_{s,t}\tau(u)\eqd\frac{D_s\tau(u+t)}{D_s(u)}$$
makes sense as a number.

\subsection{Sign issues}
\label{sc:signs} Let us choose bases in vector spaces $S^{(i)}$ for
$i=1,\dots,m$. Then for any $s\in\Z^n_0$, $u\in\Lambda_{-m}$, the
ratio $D_s\tau(u)$ becomes a number. It is tempting to say that
these numbers are partial derivatives of a function
\begin{equation*}
\tilde\tau:\Lambda_{-m}\to\C.
\end{equation*}
To construct such $\tilde\tau$, we need to choose bases in vector
spaces $T_u$ (for all $u\in\Lambda_{-m}$) consistent with the bases
in $S^{(i)}$ in the sense of Proposition \ref{pp:derivative}.

Generally speaking, this is impossible. More precisely, one can
choose bases in $T_u$ that are consistent up to sign. Equivalently,
$\tilde\tau$ can be defined as a function on a two-fold cover of
$\Lambda_{-m}$. Let us explain the sign in more details.

The basic reason for the sign is that for two finite-dimensional
vector spaces $V$, $W$, the isomorphism $\det(V\oplus
W)\simeq\det(V)\otimes\det(W)$ agrees with permutation up to sign
only. Indeed, the composition
\begin{equation*}
\det(V)\otimes\det(W)\simeq\det(V\oplus W)\simeq\det(W\oplus
V)\simeq\det(W)\otimes\det(V)\simeq\det(V)\otimes\det(W)
\end{equation*}
equals $(-1)^{\dim(V)\dim(W)}$. As a result, when we use Proposition
\ref{pp:derivative} to identify
$$
T_u\otimes(T_{u-e_i-e_j})^{-1}=(T_u\otimes(T_{u-e_i})^{-1})\otimes(T_{u-e_i}\otimes(T_{u-e_i-e_j})^{-1})=S^{(i)}\otimes
S^{(j)},
$$
the identification is multiplied by $(-1)^{\kappa_i\kappa_j}$ when
$i$ and $j$ are permuted (assuming $i\ne j$).

Denote by $\widetilde\Z^n$ the group generated by $\tilde e_i$
($i=1,\dots,n$) and $\epsilon$ subject to relations
\begin{equation*}
2\cdot \epsilon=0,\quad \epsilon\dotplus\tilde e_i=\tilde
e_i\dotplus\epsilon,\quad\tilde e_i\dotplus\tilde
e_j=(\kappa_i\kappa_j)\epsilon\dotplus\tilde e_j\dotplus\tilde e_i.
\end{equation*}
Recall that $\kappa_i$ is the order of zero of $\det(\vA)$ at $a_i$.
Consider the homomorphism $\pi:\widetilde\Z^n\to\Z^n$ that sends
$\tilde e_i$ to $e_i$ and $\epsilon$ to $0$. Using $\pi$, we can
view $\widetilde\Z^n$ as a central extension of $\Z^n$ by
$\{0,\epsilon\}$. The group $\Z^n$ acts on $\Lambda$; the action
lifts to a natural action of $\widetilde\Z^n$ on
$\{T_u\}_{u\in\Lambda}$. That is, for $u,v\in\Lambda$, an
isomorphism $T_u\iso T_v$ is determined by an element of
$\pi^{-1}(v-u)\subset\widetilde\Z^n$.

Set $\widetilde\Z^n_0\eqd\pi^{-1}(\Z^n_0)$. Fix a basis in $T_u$ for
single $u\in\Lambda_{-m}$. Acting by $\tilde s\in\widetilde\Z^n_0$,
we obtain a basis in $T_{u+\pi(\tilde s)}$. Therefore, $\tilde\tau$
is well defined as a function on the set
\begin{equation*}
\widetilde\Lambda_{-m}\eqd\{u\dotplus\tilde s:\tilde
s\in\widetilde\Z^n_0\}.
\end{equation*}

\begin{remark*} The situation simplifies if all $\kappa_i$'s are
even. In this case, the central extension $\widetilde\Z^n$ splits,
and $\tilde\tau$ makes sense as a function on $\Lambda_{-m}$. An
example of such situation is considered in Section
\ref{sc:zeroandpole}.
\end{remark*}

\subsection{d-connections with simple zeroes}\label{sc:zero}
Suppose that $\vA$ has a simple zero at $z=a_i$ for $i=1,\dots,n$.
Let us make the construction of Section \ref{sc:ratios} more
explicit in this case.

For $u\in\Lambda$, $i=1,\dots,n$, $\vL_{u-e_i}$ is an elementary
upper modification of $\vL_u$ at $u_i$.  The modification is given
by a dimension one subspace $l=l_{u,i}$ in the fiber of $\vL_u$ at
$u_i$ (see Example \ref{ex:elementary}). By Example
\ref{ex:simplesingularity}, $l$ is the kernel of the operator
\begin{equation*}
\vA_u(u_i):(\vL_u)_{u_i}\to(\vL_u)_{u_i+1}.
\end{equation*}

As in the proof of Proposition \ref{pp:derivative}, we have an
isomorphism
\begin{equation}
l=\detrg(\vL_{u-e_i})\otimes\detrg(\vL_u)^{-1}=S^{(i)}_u
\label{eq:l}
\end{equation}
(this corresponds to taking $\vL'_u=\vL_{u-e_i}$).

Set $s=e_i-e_j\in\Z^0_n$, and let us give explicit formulas for the
ratio $$D_s\tau(u)=\frac{\tau(u+e_i-e_j)}{\tau(u)}\in
S^{(i)}\otimes(S^{(j)})^{-1}.$$

Fix $u\in\Lambda_{-m}$, and let $l=l_{u,j}\subset(\vL_u)_{u_j}$ be
the dimension one subspace corresponding to the upper modification
$\vL_{u-e_j}$ of $\vL$. By \eqref{eq:l}, $S^{(j)}=l$.

Similarly, let us consider $\vL_{u+e_i}$ as a lower modification of
$\vL_u$ (instead of viewing $\vL_u$ as an upper modification of
$\vL_{u+e_i}$). Let $l'=l'_{u,i}\subset(\vL_u)_{u_i+1}$ be the
codimension one subspace corresponding to this modification. By the
proof of Proposition \ref{pp:derivative}, we get an identification
$S^{(i)}=(\vL_u)_{u_i+1}/l'$.

The identifications $S^{(i)}=l_{u+e_i,i}$ and
$S^{(i)}=(\vL_u)_{u_i+1}/l'$ are related as follows. Consider
multiplication by $z-(u_i+1)$; it gives a morphism
$\vL_u\to\vL_{u+e_i}$ (with a pole at infinity). Taking the value of
this morphism at $z=u_i+1$, we obtain a linear operator
$(\vL_u)_{u_i+1}\to(\vL_{u+e_i})_{u_i+1}$. It is easy to see that
the operator factors into a composition
\begin{equation}
(\vL_u)_{u_i+1}\to(\vL_u)_{u_i+1}/l'\iso l_{u+e_i,i}\hookrightarrow
(\vL_{u+e_i})_u. \label{eq:ltol'}
\end{equation}
This provides an isomorphism between $l_{u+e_i,i}$ and
$(\vL_u)_{u_i+1}/l'$.

The isomorphism \eqref{eq:ltol'} becomes more natural if we use the
d-connection to identify $l_{u,i}$ with $l_{u+e_i,i}$ (as in
Proposition \ref{pp:derivative}). Specifically, we obtain an
isomorphism
\begin{equation}
l_{u,i}\iso(\vL_u)_{u_i+1}/l'\quad:\quad
w\mapsto\frac{d\vA_u(u_i)}{dz}w. \label{eq:ltol'2}
\end{equation}

Suppose now that $\vL_u\simeq(\vO(-1))^m$. Fixing an isomorphism
$\iota:\vL_u\simeq(\vO(-1))^m$, we can identify the fiber
$(\vL_u)_z$ with $(\vO(-1))^m_z=\C^m$ for any point $z\in\C$. In
particular, the fibers $(\vL_u)_{u_j}$ and $(\vL_u)_{u_i+1}$ are
identified. Actually, this identification does not depend on the
choice of isomorphism $\iota$.

\begin{proposition}
The composition
\begin{equation*}
S^{(j)}=l\hookrightarrow(\vL_u)_{u_j}\iso(\vL_u)_{u_i+1}\to(\vL_u)_{u_i+1}/l'=S^{(i)}
\end{equation*}
is equal to multiplication by
\begin{equation*}
(u_i+1-u_j)\frac{\tau(u+e_i-e_j)}{\tau(u)}=(u_i+1-u_j)D_s\tau(u)
\end{equation*}
\label{pp:tauderivative}
\end{proposition}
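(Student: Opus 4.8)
The plan is to express the canonical element $D_s\tau(u)\in S^{(i)}\otimes(S^{(j)})^{-1}$ through the determinant-of-cohomology picture of Example~\ref{ex:ratio}, using one well-chosen common upper modification, and then to evaluate everything by hand after trivializing $\vL_u\simeq(\vO(-1))^m$. I take $s=e_i-e_j$ and assume, as the statement implicitly does, that $u_i+1\ne u_j$ and that the relevant points are distinct. Choose $\vL'\eqd\vL_{u-e_j}$. This $\vL'$ is the elementary upper modification of $\vL_u$ at $u_j$ attached to $l=l_{u,j}$, and, since the $j$-shift at $u_j$ and the $i$-shift at $u_i+1$ occur at distinct points and commute, $\vL_{u+e_i-e_j}$ is the lower modification of $\vL'$ at $u_i+1$ attached to $l'=l'_{u,i}$. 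Thus $\vL'$ is simultaneously an upper modification of $\vL_1\eqd\vL_u$ and of $\vL_2\eqd\vL_{u+e_i-e_j}$, so Example~\ref{ex:ratio} identifies $D_s\tau(u)=\tau(\vL_2)/\tau(\vL_1)$ with the determinant of
\[
H^0(\p1,\vL'/\vL_u)\iso H^0(\p1,\vL')\to H^0(\p1,\vL'/\vL_{u+e_i-e_j}),
\]
the first map being the inverse of the restriction isomorphism, which is an isomorphism since $H^0(\vL_u)=H^1(\vL_u)=0$.

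Next I would pin down the two outer spaces. The skyscraper $\vL'/\vL_u$ is supported at $u_j$, and the residue identifies $H^0(\p1,\vL'/\vL_u)$ with $l=S^{(j)}$; this is exactly the identification \eqref{eq:l} specialized to $\vL'_u=\vL_{u-e_j}$. The skyscraper $\vL'/\vL_{u+e_i-e_j}$ is supported at $u_i+1$, and, since $\vL'$ agrees with $\vL_u$ near $u_i+1$, evaluation identifies $H^0(\p1,\vL'/\vL_{u+e_i-e_j})$ with $(\vL_u)_{u_i+1}/l'=S^{(i)}$, the cokernel model of $S^{(i)}$ fixed before the statement. Tracing the composition, an element $v\in l$ is sent to the class modulo $l'$ of $s_v(u_i+1)$, where $s_v\in H^0(\vL')$ is the unique global section of $\vL'$ whose polar part at $u_j$ has residue $v$ (unique because $H^0(\vL_u)=0$).

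Finally I would compute $s_v$ explicitly. Fix $\iota\colon\vL_u\simeq(\vO(-1))^m$ and let $t$ be the standard trivialization of $\vO(-1)$ over $\C$, which has a simple pole at $\infty$. The global sections of $\vL'$ are then exactly the $\frac1{z-u_j}\,\vec w\,t$ with $\vec w$ in the line determined by $l$: they are regular away from $u_j$ and at $\infty$ (because $t$ is), and the pole at $u_j$ is the one permitted by $\vL'$. Matching residues gives $s_v=\frac1{z-u_j}\,\vec v\,t$, where $\vec v$ is the coordinate vector of $v$. The fiber identification $(\vL_u)_{u_j}\iso(\vL_u)_{u_i+1}$ appearing in the statement is precisely the one carrying $v=\vec v\,t(u_j)$ to $\bar v=\vec v\,t(u_i+1)$ (and it is independent of $\iota$ because $\aut((\vO(-1))^m)$ acts by constant matrices). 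Hence
\[
s_v(u_i+1)=\frac1{u_i+1-u_j}\,\vec v\,t(u_i+1)=\frac1{u_i+1-u_j}\,\bar v,
\]
so the composition above, namely $D_s\tau(u)$, sends $v$ to $\frac1{u_i+1-u_j}\,\bar v\bmod l'$, whereas the composition in the statement sends $v$ to $\bar v\bmod l'$; this yields the asserted factor $u_i+1-u_j$.

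I expect the one genuinely delicate point to be bookkeeping rather than geometry: checking that the evaluation identification $H^0(\p1,\vL'/\vL_{u+e_i-e_j})=(\vL_u)_{u_i+1}/l'$ produced by the choice $\vL'=\vL_{u-e_j}$ agrees on the nose, with no stray scalar or sign, with the cokernel model of $S^{(i)}$ fixed via the proof of Proposition~\ref{pp:derivative}, and likewise that $H^0(\p1,\vL'/\vL_u)=l$ matches \eqref{eq:l}. The latter is immediate, since \eqref{eq:l} is defined using this very $\vL'_u$; the former requires unwinding the $u$-independence clause in the proof of Proposition~\ref{pp:derivative} and verifying that it is compatible with evaluation at $u_i+1$.
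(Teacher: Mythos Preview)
Your proposal is correct and follows essentially the same approach as the paper: take $\vL'=\vL_{u-e_j}$ as the common upper modification, apply Example~\ref{ex:ratio}, identify the two skyscraper cohomologies with $l$ and $(\vL_u)_{u_i+1}/l'$, and then write the global sections of $\vL'$ explicitly as $\lambda/(z-u_j)$ using the trivialization $\iota$. The paper's proof is terser about the bookkeeping point you flag at the end, handling it simply by noting that $\vL_u$ and $\vL_{u-e_j}$ coincide near $u_i+1$.
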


\begin{proof} By Example \ref{ex:ratio}, the first derivative
$D_s\tau(u)$ is equal to the determinant of the composition
\begin{equation}
H^0(\p1,\vL_{u-e_j}/\vL_u)\iso H^0(\p1,\vL_{u-e_j})\iso
H^0(\p1,\vL_{u-e_j}/\vL_{u+e_i-e_j}).
\end{equation}

We can identify $H^0(\p1,\vL_{u-e_j}/\vL_u)$ with $l$ and
$H^0(\p1,\vL_{u-e_j}/\vL_{u+e_i-e_j})$ with
$(\vL_{u-e_j})_{u_i+1}/l'_{u-e_j,i}=(\vL_u)_{u_i+1}/l'$, where the
last equality follows from the fact that $\vL_u$ and $\vL_{u-e_j}$
coincide near $u_i+1$. Finally, global sections of $\vL_{u-e_j}$ are
of the form
\begin{equation*}
\frac{\lambda}{z-u_j}\,,\qquad \lambda\in l\subset(\vL_u)_{u_j}=\C^m
\end{equation*}
(we are using the identification $\iota:\vL_u\simeq(\vO(-1))^m$
here).
\end{proof}

Let us rewrite Proposition \ref{pp:tauderivative} using explicit
coordinates. Fix an isomorphism $\iota:\vL_u\simeq(\vO(-1))^m$. The
d-connection $\vA_u$ is then given by its matrix $A_u(z)$. By
assumption, $A_u(z)$ is regular at all points $a_i+\Z$,
$i=1,\dots,n$. Also, $\det(A_u(z))$ has simple zeroes at $u_i$ and
no zeroes at $u_i+(\Z-\{0\})$, $i=1,\dots,n$.

Using $\iota$, we identify $l_{u,j}\subset(\vL_u)_{u_j}$ with the
kernel of $A_u(u_j)$. Similarly, $l'_{u,i}\subset(\vL_u)_{u_i+1}$ is
identified with the image of $A_u(u_i)$. Instead of working with the
codimension one subspace $l'_{u,i}\subset\C^n$, we can consider its
orthogonal complement, which is a one-dimensional subspace
$(l'_{u,i})^\perp\subset(\C^n)^*=\C^n$.

Pick bases $w=w_{u,j}\in l_{u,j}$ and
$w'=w'_{u,i}\in(l'_{u,i})^\perp$. We have the following formula for
$A_{u+e_i-e_j}(z)$:
\begin{equation}
\begin{gathered}
A_{u+e_i-e_j}(z)=R_{u,i,j}(z+1)A_u(z)R_{u,i,j}(z)^{-1},\\
R_{u,i,j}(z)=I+\frac{R_0}{z-u_i-1}\,,\quad
R_{u,i,j}(z)^{-1}=I-\frac{R_0}{z-u_j}\,,\quad
\det R_{u,i,j}(z)=\frac{z-u_j}{z-u_i-1}\,,\\
R_0=\frac{u_i-u_j+1}{\langle w,w'\rangle}\,w\cdot(w')^t.
\end{gathered}
\label{eq:deformation}
\end{equation}

Observe that $A_{u+e_i-e_j}$ and $R_{u,i,j}$ are independent of the
choice of $w$ and $w'$.

In Proposition \ref{pp:derivative}, we show that for fixed $k$, the
vector space $l_{u,k}$ does not depend on $u\in\Lambda_{-m}$ in the
sense that the corresponding spaces are related by natural
isomorphisms. The isomorphisms $l_{u,k}\iso l_{u+e_i-e_j,k}$ are
given by the following formulas:
\begin{equation}
w\mapsto \begin{cases}R_{u,i,j}(u_k)\cdot w,\quad&k\ne i,j;\\
R_{u,i,j}(u_j-1)A_u(u_j-1)^{-1}\cdot w,\quad&k=j;\\
R_{u,i,j}(u_i+1)^{-1}A_{u+e_i-e_j}(u_i)\cdot w,\quad&k=i.
\end{cases}
\label{eq:basisshift}
\end{equation}
It is not hard to check explicitly that the isomorphisms are
consistent. Therefore, a choice of a basis $w\in l_{u,k}$ for one
$u\in\Lambda_{-m}$ determines bases $w_{v,k}\in l_{v,k}$ for all
$v\in\Lambda_{-m}$. Let us fix these bases.

Dually, the vector spaces $(l'_{u,k})^\perp$ are identified for all
$u\in\Lambda_{-m}$. The identifications are given by formulas
similar to \eqref{eq:basisshift}, which can be obtained using
\eqref{eq:ltol'2}. Denote by $w'_{u,k}\in l'_{u,k}$ the basis dual
to $w_{u,k}$.

We can now rewrite Proposition \ref{pp:tauderivative} as the
following formula:
$$\frac{\tilde\tau(u\dotplus\tilde e_i\dotminus\tilde e_j)}{\tilde\tau(u)}=\frac{\langle
w_{u,j},w'_{u,i}\rangle}{u_i+1-u_j}.
$$
Clearly, the second derivatives of $\tilde\tau$ are independent of
all choices.

These formulas can be used in a more `classical' definition of the
$\tau$ function as a solution to a system of difference equations.
From this point of view, the existence of a solution is not
obvious; this leads to the following statement.

\begin{corollary}
Let $A(z)$ be a square matrix with rational entries that is
regular at points $a_i+\Z$, $i=1,\dots,n$. Assume that
$\det(A(z))$ has simple zeroes at $a_i$ and no zeroes at
$a_i+(\Z-\{0\})$, $i=1,\dots,n$.

For any $u=(u_1,\dots,u_n)\in(a_1,\dots,a_n)+\Z^n$ with $\sum
u_i=\sum a_i$, we define the isomonodromy deformation $A_u(z)$
recursively using formulas \eqref{eq:deformation}.
\begin{enumerate}
\item $A_u(z)$ is well defined, provided $A(z)$ is generic in the
sense that $\langle w,w'\rangle$ does not vanish in
\eqref{eq:deformation}. In particular, $A_u(z)$ does not depend on a
representation of $u-a$ as a linear combination of generators
$e_i-e_j$.

\item Choose bases $w_{u,k}$, $w'_{u,k}$ as above. There exists a
function $\tilde\tau(a\dotplus \tilde s)$, where $\tilde
s\in\widetilde\Z^n_0$, such that
$$\frac{\tilde\tau(\tilde u\dotplus\tilde e_i\dotminus\tilde e_j)}
{\tilde\tau(\tilde u)}=\frac{\langle
w_{u,j},w'_{u,i}\rangle}{u_i+1-u_j}
$$
for every $\tilde u=a\dotplus \tilde s$, $\tilde
u\in\widetilde\Z^n_0$. Here $u=\pi(\tilde u)=a+\pi(\tilde s)$.
\end{enumerate}
\label{co:toappear}
\end{corollary}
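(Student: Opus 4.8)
The plan is to reduce both assertions to the intrinsic statements of Sections \ref{sc:mods} and \ref{sc:coordinates}, exploiting the fact that \eqref{eq:deformation} is, by its derivation in Section \ref{sc:zero}, merely the coordinate shadow of the canonical modifications produced by Lemma \ref{lm:modif}. Concretely, I would fix a trivialization $\iota\colon\vL\simeq(\vO(-1))^m$ and regard $A(z)$ as the matrix of a d-connection $\vA$ on $\vL$; the hypotheses on $A$ say exactly that $\vA$ has a simple zero at each $a_i$ and no other singularity on $a_i+\Z$. For every $u\in\Lambda_{-m}$, Lemma \ref{lm:modif} then furnishes a canonical modification $(\vL_u,\vA_u)$ together with a rational isomorphism $\vL\isorat\vL_u$, none of which refers to a path from $a$ to $u$. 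Everything else is a matter of reading this off in coordinates.

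For part (1), I would first observe that one step of \eqref{eq:deformation} computes $\vA_{u+e_i-e_j}$ from $\vA_u$ in the rational frame transported from $\iota$ through the intervening modifications, the gauge being normalized by $R_{u,i,j}(\infty)=I$; the genericity condition $\langle w,w'\rangle\neq0$ is precisely what keeps the residue $R_0$ finite, so that the elementary gauge is defined and the step remains of the simple-zero type. Writing $R_u$ for the composite of the elementary gauges along a chosen path, one has $A_u(z)=R_u(z+1)A(z)R_u(z)^{-1}$ with $R_u(\infty)=I$, since each factor is $I$ at infinity. Path-independence then reduces to showing $R_u$ depends only on $u$: if $R_u^{(1)},R_u^{(2)}$ arise from two paths, then by the uniqueness in Lemma \ref{lm:modif} they realize the same modification $\vL\isorat\vL_u$, so $\phi\eqd R_u^{(2)}(R_u^{(1)})^{-1}$ is a rational automorphism of $(\vL_u,\vA_u)$ with $\phi(\infty)=I$. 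The crux — and the step I expect to be the main obstacle — is to show that the normalization $\phi(\infty)=I$ forces $\phi=I$, so that $A_u^{(1)}=A_u^{(2)}$. I would deduce this either from the rigidity of the generic (irreducible) monodromy data, which leaves no nonscalar horizontal automorphisms, or, more elementarily, by checking directly that the explicit gauges $R_{u,i,j}$ respect the relations among the generators $e_i-e_j$ of $\Z^n_0$ (commutation of moves with disjoint support and the triangle relation $(e_i-e_j)+(e_j-e_k)=e_i-e_k$); since $\Z^n_0$ is abelian and generated by these moves, the remaining relations follow.

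For part (2), I would produce $\tilde\tau$ from the intrinsic $\tau$-elements together with the consistent trivializations already in place. Choosing the bases $w_{u,k}\in l_{u,k}$ and their duals $w'_{u,k}$ as in \eqref{eq:basisshift} amounts, for each $k$, to a basis of $S^{(k)}=l_{u,k}$ that is parallel in $u$ in the sense of Proposition \ref{pp:derivative}. A single basis of $T_a=\detrg(\vL)^{-1}$ then propagates to a basis of every $T_u$ under the action of $\widetilde\Z^n_0$ built in Section \ref{sc:signs}; because the central extension $\widetilde\Z^n$ was designed precisely to absorb the permutation signs $(-1)^{\kappa_i\kappa_j}$, this transport is consistent, and setting $\tilde\tau(\tilde u)$ equal to the coordinate of $\tau(\vL_{\pi(\tilde u)})\in T_{\pi(\tilde u)}$ in the transported basis yields a genuine function on $\widetilde\Lambda_{-m}$. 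Finally the claimed ratio formula is just Proposition \ref{pp:tauderivative} read off in these bases: the composition there equals multiplication by $(u_i+1-u_j)\,D_{e_i-e_j}\tau(u)$, and pairing against $w_{u,j}$ and $w'_{u,i}$ turns $D_{e_i-e_j}\tau(u)$ into $\langle w_{u,j},w'_{u,i}\rangle/(u_i+1-u_j)$. Thus part (2) should require no new computation beyond the sign bookkeeping of Section \ref{sc:signs}, and the only genuinely delicate point in the whole statement is the rigidification of the gauge in part (1).
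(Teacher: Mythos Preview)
Your approach is correct and matches the paper's: part (1) follows from Lemma \ref{lm:modif} and part (2) from Proposition \ref{pp:tauderivative}, exactly as the paper's two-line proof records. The rigidification step you flag as the main obstacle is in fact immediate: the genericity hypothesis $\langle w,w'\rangle\ne0$ along the path is equivalent (via Proposition \ref{pp:tauderivative}) to $\tau(\vL_u)\ne0$, hence to $\vL_u\simeq(\vO(-1))^m$, whose only regular automorphisms are constant matrices, so $\phi(\infty)=I$ forces $\phi=I$ without any appeal to monodromy data or explicit relation-checking.
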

\begin{proof}
(1) follows from Lemma \ref{lm:modif}; (2) follows from
Proposition \ref{pp:tauderivative}.
\end{proof}

\begin{remark*} Corollary \ref{co:toappear}(1) is a version of Theorem 2.1 of
\cite{B2}.
\end{remark*}

\subsection{d-connections with simple zeroes and poles}
\label{sc:zeroesandpoles} Now suppose the d-connection has simple
zeroes and simple poles. Let us consider isomonodromy deformations
corresponding to a simultaneous shift of a simple pole and a simple
zero. This kind of equations is important because it has one of the
simplest continuous limits (Section \ref{sc:limit}). Let us give an
analog of Corollary \ref{co:toappear}; the proof is completely
similar. We omit the coordinate-free formulation (an analog of
Proposition \ref{pp:tauderivative}).

Suppose $A(z)$ has simple zeroes at $n_a$ distinct points
$a_1,\dots,a_{n_a}$, simple poles at $n_b$ distinct points
$b_1,\dots,b_{n_b}$, and no singularities at $a_i+(\Z-\{0\})$,
$b_j+(\Z-\{0\})$. Suppose
\begin{equation*}
(u;v)=(u_1,\dots,u_{n_a};v_1,\dots,v_{n_b})\in(a_1,\dots,a_{n_a};b_1,\dots,b_{n_b})+\Z^{n_a+n_b}
\end{equation*}
satisfies $$\sum(u_i-a_i)=\sum(v_j-b_j).$$ Assuming $A(z)$ is
generic, there is a unique matrix $R(z)$ with rational
coefficients that satisfies the following conditions:
\begin{enumerate}
\item All singularities of $R(z)$ and $R^{-1}(z)$ belong to the
progressions $a_i+\Z$, $b_j+\Z$;

\item $R(\infty)=I$;

\item $A_{u;v}(z)=R(z+1)A(z)R(z)^{-1}$ has simple zeroes at
$u_1,\dots,u_{n_a}$, simple poles at $v_1,\dots,v_{n_b}$, and no
singularities at $u_i+(\Z-\{0\})$, $v_j+(\Z-\{0\})$.
\end{enumerate}

For instance, if $u=(a_1-1,a_2,\dots,a_{n_a})$ and
$v=(b_1-1,b_2,\dots,b_{n_b})$, $R(z)$ is given by the following
formulas:
\begin{equation}
\begin{gathered}
R(z)=I+\frac{R_0}{z-b_1}\,,\quad
R(z)^{-1}=I-\frac{R_0}{z-a_1}\,,\quad
\det R(z)=\frac{z-a_1}{z-b_1}\,,\\
R_0=\frac{b_1-a_1}{\langle w,w'\rangle}\,w\cdot(w')^t.
\end{gathered}
\end{equation}
Here $w$ is a basis in the kernel of $A(a_1)$, and $w'$ is a basis
in the image of $\lim_{z\to b_1}(z-b_1)A^t(z)$. Similar formulas can
be found for other `elementary shifts'. One can then use these
formulas to compute $A_{u;v}(z)$ recursively.

Similarly to \eqref{eq:basisshift}, a choice of $w$ and $w'$ for all
singular points of $A(z)$ determines bases in the corresponding
spaces for all deformations $A_{u;v}(z)$. This allows us to consider
$\tau$ as a function of $(u;v)$.

Similarly to Section \ref{sc:signs}, the function $\tau$ is defined
only up to a sign (that is, it is a function on the two-fold cover
of the set of $(u;v)$). One way to avoid this complication is to
assume that $n_a=n_b=n$ and that we always move $i$th zero and $i$th
pole simultaneously. Let us make this assumption, so that
$u_i-a_i=v_i-b_i$ for all $i$. Then the function $\tau$ satisfies
the following equation:
\begin{equation*}
\frac{\tau(u_1-1,u_2,\dots,u_n;v_1-1,v_2,\dots,v_n)}
{\tau(u_1,u_2,\dots,u_n;v_1,v_2,\dots,v_n)}=\frac{\langle
w,w'\rangle}{u_1-v_1}. \label{eq:D1tau}
\end{equation*}
Here $w$ is the basis in the kernel of $A_{u;v}(u_1)$, and $w'$ is
the basis in the image of $\lim_{z\to v_1}(z-v_1)A_{u;v}^t(z)$. As
above, the second derivatives are independent of the choice of $w$
and $w'$.

\subsection{}\label{sc:zeroandpole}
Let us now look at another type of singularity structure that
becomes useful in Section \ref{sc:probability}. Suppose $A(z)$ has
singularities at $n$ points $a_1,\dots,a_n$, and no singularities at
$a_i+(\Z-\{0\})$. The singularities at $a_i$ are of the following
kind:
\begin{itemize}
\item Matrix elements of $A(z)$ have at most a first-order pole at
$a_i$;
\item $\res_{a_i}(A(z))$ is a matrix of rank $1$;
\item $\det(A(z))$ is regular nonzero at $a_i$.
\end{itemize}
This can be viewed as a degeneration of the situation considered in
the previous section, when zeroes and poles coalesce.

If $A(z)$ is generic, for $(u_1,\dots,u_n)\in(a_1,\dots,a_n)+\Z^n$,
there exists a unique rational matrix $R(z)=R_u(z)$ with the
following properties:
\begin{enumerate}
\item All singularities of $R(z)$ and $R^{-1}(z)$ belong to the
progressions $a_i+\Z$;
\item $R(\infty)=I$;
\item $A_u(z)=R(z+1)A(z)R(z)^{-1}$ has the same singularity structure as
$A(z)$ with singularities at $u_i$.
\end{enumerate}

Choose a basis $w=w_u$ in the image of $\res_{u_1}(A^{-1}_u(z))$,
and a functional $w'+w''(z-u_1)=w'_u+w''_u(z-u_1)$ such that
\begin{gather}
\langle w, w'\rangle=0\label{cn:nopole}\\
w'\ne 0\\
A_u^{-t}(z)(w'+w''(z-u_1)) \text{ vanishes at
}z=u_1.\label{cn:vanishes}
\end{gather} Note that \eqref{cn:nopole} implies that
\eqref{cn:vanishes} is regular at $z=u_1$. Equivalently,
\eqref{cn:nopole}--\eqref{cn:vanishes} mean that in a neighborhood
of $z=u_1$, we can write
$$A(z)=H(z)\left(I+\frac{1}{z-u_1}\cdot\frac{w\cdot(w')^t}
{\langle w,w''\rangle}\right)$$ for a holomorphic invertible matrix
$H(z)$. The pair $(w',w'' \mod w^\perp)$ is defined up to a scalar.

\begin{remark*} Geometrically, the choices can be explained in terms
of Section \ref{sc:ratios}. Suppose the d-connection $\vA_u$ on a
vector bundle $\vL_u$ has at $z=u_1$ a singularity of the kind we
consider. There exists a unique elementary upper modification
$\vL'_u$ of $\vL_u$ at $u_1$ such that $\vL_{u-e_1}$ is an
elementary lower modification of $\vL'_u$ at $u_1$. The vector $w$
is a basis in the dimension one space $l\subset(\vL_u)_{u_1}$
corresponding to the modification $\vL_u\to\vL'_u$, while
$w'+w''(z-u_1)$ should be thought of as a functional on the fiber
$(\vL'_u)_{u_1}$ whose kernel is the codimension one space
$l'\subset(\vL'_u)_{u_1}$ corresponding to the modification
$\vL'_u\leftarrow\vL_{u-e_1}$.
\end{remark*}

For $u=(a_1-1,a_2,\dots,a_n)$, we can write $R(z)$ in terms of
$w,w',w''$:
\begin{equation*}
\begin{gathered}
R(z)=I+\frac{R_0}{z-a_1}\,,\quad
R(z)^{-1}=I-\frac{R_0}{z-a_1}\,,\quad
\det R(z)=1\,,\\
R_0=\frac{w\cdot(w')^t}{\langle w,w''\rangle}\,.
\end{gathered}
\end{equation*}

The choice of $(w,w',w'' \mod w^\perp)$ for $A(z)$ determines
corresponding choices $(w_u,w_u', w''_u \mod w^\perp_u)$ for all
deformations $A_u(z)$. Explicitly, if $u_1=a_1$, we have
\begin{gather*}
w_u=R_u(a_1)w,\qquad w_u'=R_u^{-t}(a_1)w',\\
w''_u=R_u^{-t}(a_1)w''+\left.\frac{dR_u^{-t}(z)}{dz}\right|_{z=a_1}w'.
\end{gather*}
Here $R_u(z)$ is the gauge matrix: $A_u(z)=R_u(z+1)A(z)R_u(z)^{-1}$.
On the other hand, for $u=(a_1-1,a_2,\dots,a_n)$, we have
\begin{gather*}
w_u=R_u(a_1-1)A^{-1}(a_1-1)w,\qquad w_u'=R_u^{-t}(a_1-1)A^{t}(a_1-1)w',\\
w''_u=R_u^{-t}(a_1-1)A^t(a_1-1)w''+\left.
\frac{d(R_u^{-t}(z)A^t(z))}{dz}\right|_{z=a_1-1}w'.
\end{gather*}

Similarly, we choose triples $(w,w',w''\mod w^\perp)$ at other
singularities $a_2,\dots,a_n$ of $A(z)$, and obtain corresponding
triples for all deformations $A_u(z)$. After these choices, we can
view $\tau$ as a function of $u$. Note that $\tau$ is defined
canonically (not just up to sign), because zeroes and poles move in
pairs (in terms of Section \ref{sc:ratios}, $\kappa_i=0$). The
equation for $\tau$ then becomes
$$
\frac{\tau(u_1-1,u_2,\dots,u_n)}{\tau(u_1,\dots,u_n)}=\langle
w_u,w_u''\rangle.
$$
(This is the value of functional $w_u'+w_u''(z-u_1)$ on section
$w/(z-u_1)$ of the elementary upper modification of the original
bundle.)

\subsection{Hirota identities}\label{sc:Hirota} $\tau$-functions satisfy various determinantal
identities of Hirota type. Let us show how they arise from
isomonodromy transformations. To be concrete, we restrict ourselves
to the case when $\vA$ has simple zeroes. Another case, when
singularities are of type considered in Section
\ref{sc:zeroandpole}, appears in Section \ref{sc:probability}, see
Remark \ref{rm:Hirota2}.

\begin{proposition} In the settings of Section \ref{sc:zero}, assume
that $\vL_u\simeq(\vO(-1))^m$. Then
\begin{equation}
\frac{\tau\left(u+\sum\limits_{i\in I} e_i-\sum\limits_{j\in
J}e_j\right)}{\tau(u)}=\det\left[\frac{\tau(u+e_i-e_j)}{\tau(u)}\right]_{i\in
I,j\in J}.\label{eq:Hirota}
\end{equation}
Here $I,J\subset\{1,\dots,n\}$ are non-intersecting subsets of the
same cardinality. \label{pp:Hirota}
\end{proposition}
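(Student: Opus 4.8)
The plan is to realize both sides of \eqref{eq:Hirota} as the determinant of a single linear map --- computed globally on the left and block by block on the right. Write $\vL_1\eqd\vL_u$ and $\vL_2\eqd\vL_{u+\sum_{i\in I}e_i-\sum_{j\in J}e_j}$, and let $k\eqd|I|=|J|$. Since each simple zero has $\kappa_i=1$, the shift $s\eqd\sum_{i\in I}e_i-\sum_{j\in J}e_j$ satisfies $\sum_i\kappa_is_i=k-k=0$, so $s\in\Z^n_0$ and $\vL_2$ again has slope $-1$; combined with the hypothesis $\vL_1\simeq(\vO(-1))^m$, this is precisely the situation of Example \ref{ex:ratio}. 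The left-hand side of \eqref{eq:Hirota} is the ratio $\tau(\vL_2)/\tau(\vL_1)$, which by that example equals the determinant of the composition $H^0(\p1,\vL'/\vL_1)\iso H^0(\p1,\vL')\to H^0(\p1,\vL'/\vL_2)$ for any common upper modification $\vL'$ of $\vL_1$ and $\vL_2$.

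The key choice is $\vL'\eqd\vL_{u-\sum_{j\in J}e_j}$. The points $u_j$ ($j\in J$) are distinct simple zeroes, so by Example \ref{ex:simplesingularity} the bundle $\vL'$ is obtained from $\vL_1$ by stacking elementary upper modifications, one at each $u_j$. Rewriting $\vL'=\vL_{2-\sum_{i\in I}e_i}$ and using $I\cap J=\emptyset$ exhibits $\vL'$ equally as a stack of elementary upper modifications of $\vL_2$, one at each of the distinct points $u_i+1$ ($i\in I$). Hence $\vL'$ is a common upper modification, the torsion quotients $\vL'/\vL_1$ and $\vL'/\vL_2$ are skyscrapers supported on $\{u_j\}_{j\in J}$ and $\{u_i+1\}_{i\in I}$ respectively, and passing to global sections yields canonical decompositions $H^0(\p1,\vL'/\vL_1)=\bigoplus_{j\in J}l_{u,j}$ and $H^0(\p1,\vL'/\vL_2)=\bigoplus_{i\in I}S^{(i)}$ into one-dimensional summands, where $S^{(i)}=(\vL_u)_{u_i+1}/l'_{u,i}$ as in Proposition \ref{pp:tauderivative}. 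For a linear map respecting these direct-sum presentations, the determinant is the determinant of the $k\times k$ scalar matrix of its blocks, so it remains to identify the $(i,j)$ block with $\tau(u+e_i-e_j)/\tau(u)$.

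Each block is computed locally. Because $\vL_1\simeq(\vO(-1))^m$, the isomorphism $H^0(\p1,\vL'/\vL_1)\iso H^0(\p1,\vL')$ lifts the basis class $w_{u,j}\in l_{u,j}$ to the global section $w_{u,j}/(z-u_j)$, exactly as in the proof of Proposition \ref{pp:tauderivative}. Projecting into the stalk of $\vL'/\vL_2$ at $u_i+1$ amounts to evaluating this section at $z=u_i+1$ and reducing modulo $l'_{u,i}$; since $\vL'$ and $\vL_2$ both coincide with $\vL_u$ near $u_i+1$, the value is $w_{u,j}/(u_i+1-u_j)\in(\vL_u)_{u_i+1}$. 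Thus the $(i,j)$ block sends $w_{u,j}$ to $\tfrac{1}{u_i+1-u_j}\bigl[w_{u,j}\bmod l'_{u,i}\bigr]$ in $S^{(i)}$. By Proposition \ref{pp:tauderivative} the reduction map $l_{u,j}\to(\vL_u)_{u_i+1}/l'_{u,i}=S^{(i)}$ is multiplication by $(u_i+1-u_j)\,\tau(u+e_i-e_j)/\tau(u)$; the two factors $u_i+1-u_j$ cancel, leaving exactly $\tau(u+e_i-e_j)/\tau(u)$, which is the $(i,j)$ entry of the right-hand side of \eqref{eq:Hirota}. Taking the determinant of the scalar block matrix then gives the claimed identity.

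I expect the genuinely delicate point to be the locality claim underlying the block computation: one must verify that $w_{u,j}/(z-u_j)$ really is a global section of the full bundle $\vL'$ (not merely of the one-step bundle $\vL_{u-e_j}$) and that its value at $u_i+1$, together with the reduction modulo $l'_{u,i}$, is insensitive to the modifications carried out at the other points $u_{j'}$ and $u_{i'}+1$. This follows because $I$ and $J$ are disjoint and all the relevant points are distinct, so the supports of the various elementary modifications never interfere. The remaining bookkeeping is a matter of fixing compatible orderings of $I$ and $J$ on both sides of \eqref{eq:Hirota}; since every summand $l_{u,j}$ and $S^{(i)}$ is one-dimensional and the same ordering is used throughout, the sign subtleties of Section \ref{sc:signs} do not enter, and the determinant of the composition equals the determinant of the matrix of blocks.
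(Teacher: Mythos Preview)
Your argument is correct and follows essentially the same route as the paper's proof: both take the common upper modification $\vL'=\vL_{u-\sum_{j\in J}e_j}$, invoke Example~\ref{ex:ratio} to realize the left-hand side as the determinant of $H^0(\p1,\vL'/\vL_u)\iso H^0(\p1,\vL')\to H^0(\p1,\vL'/\vL_2)$, and then identify each matrix entry with $\tau(u+e_i-e_j)/\tau(u)$ via Proposition~\ref{pp:tauderivative}. The only cosmetic difference is that the paper passes through the explicit coordinate formula $\det\bigl[\langle w_{u,j},w'_{u,i}\rangle/(u_i+1-u_j)\bigr]$ after fixing a trivialization $\vL_u\simeq(\vO(-1))^m$, whereas you phrase the same computation in terms of one-dimensional blocks $l_{u,j}\to S^{(i)}$; the content is identical.
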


\begin{remark*} As explained in Section \ref{sc:signs}, $\tau(u+s)/\tau(u)$ is
defined up to a sign, corresponding to the lift of $s\in\Z^m$ to the
two-fold cover $\widetilde\Z^m$. In Proposition \ref{pp:Hirota}, the
lift is chosen as follows: for orderings $i_1,\dots,i_k$ of $I$ and
$j_1,\dots,j_k$ of $J$, we take
$$\tilde e_{i_1}\dotminus\tilde e_{j_1}\dotplus\dots\dotplus
\tilde e_{i_k}\dotminus\tilde e_{j_k}$$ as the lift of $\sum
e_i-\sum e_j$ in the left-hand side, and $\tilde e_i\dotminus\tilde
e_j$ as the lift of $e_i-e_j$ in the right-hand side. The orderings
of $I$ and $J$ also fix the order of rows and columns in the
determinant.
\end{remark*}

\begin{proof}
Let us choose bases $w_{u,k}$, $w'_{u,k}$ as in Section
\ref{sc:zero}. By Example \ref{ex:ratio}, the left-hand side of
\eqref{eq:Hirota} equals the determinant of the transition matrix in
$H^0(\p1,\vL_{u-\sum e_j})$ from the basis consisting of meromorphic
sections of $\vL_u$ with a single pole at $u_j$ and residue
$w_{u,j}$ to the dual basis of the functionals that send a section
$s$ to $w'_{u,i}(s(u_i+1))$. (These bases come from
$H^0(\p1,\vL_{u-\sum e_j}/\vL)$ and $H^0(\p1,\vL_{u-\sum
e_j}/\vL_{u+\sum e_i-\sum e_j})$, respectively.)

Explicitly, we can choose a trivialization $\vL_u\simeq(\vO(-1))^m$,
and then
$$\frac{\tau\left(u+\sum\limits_{i\in I} e_i-\sum\limits_{j\in
J}e_j\right)}{\tau(u)}=\det\left[\frac{\langle
w_{u,j},w'_{u,i}\rangle}{u_i+1-u_j}\right]_{i\in I,j\in J}.
$$
The statement follows.
\end{proof}

\begin{remark*} Suppose $I=\{i_1,i_2\}$,
$J=\{j_1,j_2\}$. Then \eqref{eq:Hirota} takes the form
\begin{multline*}
\tilde\tau(u\dotplus e_{i_1} \dotminus e_{j_1}\dotplus
e_{i_2}\dotminus e_{j_2})\,\tilde\tau(u)\\= \tilde\tau(u\dotplus
e_{i_1}\dotminus e_{j_1})\,\tilde\tau(u\dotplus e_{i_2}\dotminus
e_{j_2})- \tilde\tau(u\dotplus e_{i_1}\dotminus
e_{j_2})\,\tilde\tau(u\dotplus e_{i_2}\dotminus e_{j_1}).
\end{multline*} This is often called Hirota's difference bilinear
equation.
\end{remark*}

\section{Continuous limit}\label{sc:limit}

\subsection{}
Let us recall some properties of isomonodromy deformation of
connections on $\p1$ in the simplest case of regular singularities.

Consider the system of linear ordinary differential equations
\begin{equation}
\frac{dY(\zeta)}{d\zeta}=B(\zeta)Y(\zeta),\quad
B(\zeta)=\sum_{i=1}^n\frac{B_i}{\zeta-y_i}\,, \label{eq:connection}
\end{equation}
where $B_i$'s are constant $m\times m$ matrices. Clearly,
\eqref{eq:connection} has regular singularities (simple poles) at
$\zeta=y_1,\dots,y_n,\infty$ and no other poles. Geometrically, we
can view \eqref{eq:connection} as a connection on the trivial vector
bundle on $\p1$.

The isomonodromy deformation of \eqref{eq:connection} is controlled
by a system of differential equations on $B_i$'s (viewed as
functions of $y_j$'s) called the \emph{Schlesinger system}:
\begin{equation}
\frac{\partial B_i}{\partial y_j}=\frac{[B_i,B_j]}{y_i-y_j}\,, \quad
\frac{\partial B_i}{\partial y_i}=-\sum_{j\ne
i}\frac{[B_i,B_j]}{y_i-y_j}\,. \label{eq:schlesinger}
\end{equation}

Instead of working with $B_i$, let us consider $B(\zeta)$ given by
\eqref{eq:connection}. Then \eqref{eq:schlesinger} can be written as
\begin{equation}
\frac{\partial B(\zeta)}{\partial
y_i}=\frac{B_i}{(\zeta-y_i)^2}-\left[\frac{B_i}{\zeta-y_i},B(\zeta)\right]\,.
\label{eq:varschlesinger}
\end{equation}

Let us now introduce the tau-function of the Schlesinger system. We
follow M.~Jimbo, T.~Miwa, and K.~Ueno \cite{JMU}.

For any solution of the Schlesinger system, the 1-form
\begin{equation*}
\omega=\sum_{i=1}^k\left(\sum_{j\ne
i}\frac{\tr(B_iB_j)}{y_i-y_j}\right)dy_i
\end{equation*}
 is closed. Locally, there exists a function $\tau$ with
$d\log(\tau)=\omega$.

Using Schlesinger system, one easily computes
\begin{equation}
\frac{\partial^2\log(\tau)}{\partial y_i\partial y_j}=\frac{\tr
(B_iB_j)}{(y_i-y_j)^2}\,,\qquad \frac{\partial^2\log(\tau)}{\partial
y_i^2}=-\sum_{j\ne i}\frac{\tr (B_iB_j)}{(y_i-y_j)^2}\,.
\label{eq:secondder}
\end{equation}

Our goal is to show how \eqref{eq:schlesinger} and
\eqref{eq:secondder} appear as limits of their discrete analogs.

\subsection{}
Return now to the setting of Section \ref{sc:zeroesandpoles}. Let
$\vA(z)$ be a d-connection on $(\vO(-1))^m$ with simple zeroes at
$n$ distinct points $a_1,\dots,a_n$, simple poles at $n$ distinct
points $b_1,\dots,b_n$, and no other singularities. Assume also that
$\vA(\infty)=I$. As usual, we assume that no two singularities
differ by an integer. We consider the action of $\Z^n$ by
isomonodromy transformations that shift $a_i$ and $b_i$
simultaneously.

For every $(u;v)=(u_1,\dots,u_n;v_1,\dots,v_n)$ such that
$u_i-a_i=v_i-b_i\in\Z$, denote the corresponding modification of
$\vA(z)$ by $\vA_{u;v}(z)$. The matrix of $\vA_{u;v}(z)$ is denoted
by $A_{u;v}(z)$. Also, for every index $i=1,\dots,n$, we choose a
basis $w_{u;v|i}$ of $\ker(A_{u;v}(u_i))$, and a basis $w'_{u;v|i}$
in the image of $\lim_{z\to v_i}(z-v_i)A_{u;v}^t(z)$. The choices
for different $(u;v)$ have to be compatible as described in Section
\ref{sc:zeroesandpoles}.

Let us introduce the following notation. For $(u;v)$ as above, set
\begin{align*}
D_i(\tau(u;v))&=\frac{\tau(u+e_i;v+e_i)}{\tau(u;v)}\,,&i=1,\dots,n;\\
D^2_{i,j}(\tau(u;v))&=\frac{\tau(u+e_i+e_j;v+e_i+e_j)\cdot\tau(u;v)}
{\tau(u+e_i;v+e_i)\cdot\tau(u+e_j;v+e_j)}\,,&i,j=1,\dots,n.
\end{align*}

\begin{theorem}
Assume that our data depend on the small parameter $\varepsilon\ne0$
so that
$$a_i=\alpha_i+\frac{y_i}{\varepsilon},\qquad
b_i=\beta_i+\frac{y_i}{\varepsilon},\qquad
\lim_{\varepsilon\to0}\frac{w_{a;b|i}(\varepsilon)\cdot
(w'_{a;b|i}(\varepsilon))^t}{\langle
w_{a;b|i}(\varepsilon),w'_{a;b|i}(\varepsilon)\rangle}=\frac{B_i}{\beta_i-\alpha_i}$$
for some matrices $B_i$.

Fix $(u;v)$ as above and $\zeta\in\C-\{y_1,\dots,y_n\}$. Then
\begin{gather}
A_{u;v}(\zeta\varepsilon^{-1};\varepsilon)=I+
\varepsilon\sum_{i=1}^n\frac{B_i}{\zeta-y_i}+o(\varepsilon);
\label{eq:lim1}\\
\lim_{\varepsilon\to0}
\frac{A_{u+e_i;v+e_i}(\zeta\varepsilon^{-1};\varepsilon)-
A_{u;v}(\zeta\varepsilon^{-1};\varepsilon)}{\varepsilon^2}=
\frac{B_i}{(\zeta-y_i)^2}-\sum_{j\ne
i}\frac{[B_i,B_j]}{(\zeta-y_i)(\zeta-y_j)}\,;
\label{eq:lim2}\\
\lim_{\varepsilon\to0}
\frac{D^2_{i,j}(\tau(u;v;\varepsilon))-1}{\varepsilon^2}=
\begin{cases}
\dfrac{\tr (B_iB_j)}{(y_i-y_j)^2},\quad &(i\ne j)\\
\\
-\sum\limits_{k\ne i}\dfrac{\tr (B_iB_k)}{(y_i-y_k)^2},\quad &(i=j).
\end{cases}
\label{eq:lim3}
\end{gather}
\label{th:lim}
\end{theorem}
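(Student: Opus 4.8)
The plan is to establish the three limits by explicitly tracking how the gauge matrix $R(z)$ of an elementary shift degenerates as $\varepsilon\to0$. The elementary shift moving $(a_1,b_1)\mapsto(a_1-1,b_1-1)$ is governed by the rank-one matrix $R_0=\frac{b_1-a_1}{\langle w,w'\rangle}\,w\cdot(w')^t$, and under the scaling $a_i=\alpha_i+y_i/\varepsilon$, $b_i=\beta_i+y_i/\varepsilon$ we have $b_1-a_1=\beta_1-\alpha_1$ independent of $\varepsilon$, while by hypothesis $\frac{w\cdot(w')^t}{\langle w,w'\rangle}\to\frac{B_1}{\beta_1-\alpha_1}$. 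Thus $R_0\to B_1$. First I would substitute $z=\zeta\varepsilon^{-1}$ into $R(z)=I+\frac{R_0}{z-b_1}=I+\frac{\varepsilon R_0}{\zeta-\varepsilon\beta_1-y_1}$ and expand, obtaining $R(\zeta\varepsilon^{-1})=I+\varepsilon\frac{B_1}{\zeta-y_1}+o(\varepsilon)$.

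For \eqref{eq:lim1} I would iterate the conjugation $A_{u;v}(z)=R(z+1)A(z)R(z)^{-1}$ over the sequence of elementary shifts connecting the base point $(a;b)$ to $(u;v)$. Since each $R$ is $I+O(\varepsilon)$ on the rescaled variable and the shift by $+1$ in $z$ becomes a shift by $\varepsilon$ in $\zeta$ (hence negligible at leading order), the product of gauge matrices telescopes and the $O(\varepsilon)$ corrections from the $n$ shifts add up to $\varepsilon\sum_i\frac{B_i}{\zeta-y_i}$. A careful bookkeeping shows that $A(\zeta\varepsilon^{-1})$ itself tends to $I$ (using $\vA(\infty)=I$ and that all finite singularities escape to infinity in $\zeta$), so the leading deviation comes entirely from the gauge factors. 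This recovers the Schlesinger connection $B(\zeta)=\sum_i\frac{B_i}{\zeta-y_i}$ of \eqref{eq:connection}.

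For \eqref{eq:lim2} I would expand the single shift $A_{u+e_i;v+e_i}=R(z+1)A_{u;v}(z)R(z)^{-1}$ to second order in $\varepsilon$. Writing $R=I+\varepsilon\frac{B_i}{\zeta-y_i}+\varepsilon^2 C+o(\varepsilon^2)$ and $A_{u;v}=I+\varepsilon B+o(\varepsilon)$ with $B=\sum_k\frac{B_k}{\zeta-y_k}$, the shift $z\mapsto z+1$ produces $R(z+1)=I+\varepsilon\frac{B_i}{\zeta+\varepsilon-y_i}+\cdots$, whose expansion in $\varepsilon$ contributes the derivative term $-\varepsilon^2\frac{B_i}{(\zeta-y_i)^2}$; combining this with the commutator arising from $R(z+1)B(z)R(z)^{-1}$ yields, after the second-order terms are collected, exactly $\frac{B_i}{(\zeta-y_i)^2}-\sum_{j\ne i}\frac{[B_i,B_j]}{(\zeta-y_i)(\zeta-y_j)}$, which is the discrete image of \eqref{eq:varschlesinger}. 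I expect this second-order expansion to be the main obstacle: one must carefully separate the genuine $\frac{1}{(\zeta-y_i)^2}$ contribution (coming from the shift-by-$1$, i.e. the discrete derivative) from the commutator contributions, and confirm that the $C$ term and the cross terms combine into commutators $[B_i,B_j]$ rather than unsymmetrized products.

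Finally, for \eqref{eq:lim3} I would use the $\tau$-function formula from Section \ref{sc:zeroesandpoles}, namely $D_i(\tau(u;v))=\frac{\langle w_{u;v|i},w'_{u;v|i}\rangle}{u_i-v_i}$ up to the chosen normalization, so that $D^2_{i,j}(\tau)$ is a ratio of inner products along two commuting shift directions. Taking logarithms, the leading nontrivial term is of order $\varepsilon^2$ because the first-order variations cancel in the second difference. Expanding the pairings $\langle w,w'\rangle$ to second order using the transformation rules for $w_u,w'_u$ under the gauge $R_u$ — which are precisely the ones recorded in Section \ref{sc:zeroesandpoles} — I would identify the coefficient of $\varepsilon^2$ with $\tr(B_iB_j)/(y_i-y_j)^2$ for $i\ne j$ and with the diagonal sum for $i=j$, matching \eqref{eq:secondder}. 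The consistency of the off-diagonal and diagonal cases mirrors the structure of the Schlesinger $\tau$-function, and the computation should close once the first-order terms are shown to drop out of the symmetric second difference.
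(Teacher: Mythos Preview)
Your plans for \eqref{eq:lim2} and \eqref{eq:lim3} are essentially what the paper does: expand the single gauge $R$ to order $\varepsilon^2$, track $z\mapsto z+1$ as $\zeta\mapsto\zeta+\varepsilon$, and for the $\tau$-ratios use the inner-product formula $D_i(\tau)\propto\langle w,w'\rangle^{-1}$ together with the transformation rules for $w,w'$ under $R$.

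Your argument for \eqref{eq:lim1}, however, has a real gap. You assert that ``$A(\zeta\varepsilon^{-1})$ itself tends to $I$ (using $\vA(\infty)=I$ and that all finite singularities escape to infinity in $\zeta$)'' and that the term $\varepsilon\sum_i B_i/(\zeta-y_i)$ is produced by the gauge matrices of the isomonodromy shifts from $(a;b)$ to $(u;v)$. This is backwards. The singularities of $A_{a;b}$ lie at $z=a_i,b_i$; in the rescaled variable $\zeta=\varepsilon z$ they sit at $\varepsilon\alpha_i+y_i$ and $\varepsilon\beta_i+y_i$, which converge to $y_i$ --- they do \emph{not} escape to infinity. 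Hence the asymptotic $A_{a;b}(\zeta\varepsilon^{-1})=I+\varepsilon\sum_i B_i/(\zeta-y_i)+o(\varepsilon)$ is a property of $A_{a;b}$ itself and must be established before any isomonodromy shift is applied. The (finitely many) shifts from $(a;b)$ to $(u;v)$ then \emph{preserve} this asymptotic, since each gauge is $I+O(\varepsilon)$ and conjugation by such a matrix leaves the $O(\varepsilon)$ term unchanged; they do not create it.

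The paper supplies this missing step with a separate lemma: $A_{a;b}$ is constructed inductively in $n$ as a \emph{product} (not a conjugation) $A_{a;b}=\tilde A_{a;b}\cdot R_{a;b|n}$, where $\tilde A_{a;b}$ has one fewer zero--pole pair and $R_{a;b|n}(\zeta\varepsilon^{-1})=I+\varepsilon B_n/(\zeta-y_n)+o(\varepsilon)$. Multiplying the $n$ factors gives \eqref{eq:lim1} for $(u;v)=(a;b)$. You have conflated this multiplicative construction of $A_{a;b}$ with the conjugation formula $A_{u;v}=R(z+1)A\,R(z)^{-1}$ governing isomonodromy shifts; they are different operations playing different roles, and only the former accounts for the leading $\varepsilon B(\zeta)$ term.
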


The right-hand sides of \eqref{eq:lim1}, \eqref{eq:lim2}, and
\eqref{eq:lim3} correspond to \eqref{eq:connection},
\eqref{eq:varschlesinger}, and \eqref{eq:secondder}.

\begin{remark*} Note that Theorem \ref{th:lim} leads to the
Schlesinger system with rank one matrices $B_i$. One can obtain the
general case by a proper limiting procedure bringing several
singularities together.
\end{remark*}

\subsection{Proof of Theorem \ref{th:lim}}

\begin{lemma}
 Let $w_{a;b|i}(\varepsilon)$ and
$w'_{a;b|i}(\varepsilon)$ $(i=1,\dots,n)$ be vector-valued functions
of $\varepsilon\ne0$. Suppose that they satisfy the limit relation
above. For $\varepsilon$ small enough, there exists unique
$A_{a;b}(z;\varepsilon)$ of the kind we consider that corresponds to
these data.
\end{lemma}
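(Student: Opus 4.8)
The plan is to reconstruct the d-connection $A_{a;b}(z;\varepsilon)$ directly from the prescribed singularity data, establishing existence and uniqueness simultaneously. The d-connection we seek has simple zeroes at $a_1,\dots,a_n$, simple poles at $b_1,\dots,b_n$, normalization $A(\infty)=I$, and prescribed kernel and cokernel data $w_{a;b|i}$, $w'_{a;b|i}$ at each singular point. First I would translate these requirements into a Mittag-Leffler/Riemann-Hilbert type interpolation problem: writing $A(z) = I + \sum_i \frac{C_i}{z-b_i}$ for the polar part forced by the simple poles and the normalization at $\infty$, the residue matrices $C_i$ must be rank one with prescribed column space (determined by $w'_{a;b|i}$), and the simple-zero conditions at the $a_i$ impose that $\det A(z)$ vanishes exactly at the $a_i$ with the kernel of $A(a_i)$ spanned by $w_{a;b|i}$.

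Next I would set up these conditions as a linear (or nearly linear) system for the free parameters in the residues $C_i = \lambda_i\, u_i (w'_{a;b|i})^t$, where $u_i$ is the (as yet unknown) residue direction and $\lambda_i$ a scalar. The kernel conditions $A(a_i) w_{a;b|i} = 0$ give $m n$ scalar equations, which I expect to reduce — using the rank-one structure and the explicit resolvent-type dependence of $A(a_i)$ on the $C_j$ — to a system whose coefficient matrix is a Cauchy-like matrix built from the differences $a_i - b_j$. The genericity hypothesis (that the relevant pairing $\langle w, w'\rangle$ does not vanish, as in the generic assumption of Section \ref{sc:zeroesandpoles}) should guarantee that this Cauchy-type system is invertible, yielding a unique solution for the residue data and hence a unique $A_{a;b}(z;\varepsilon)$. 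This is exactly the content of the genericity condition appearing in Corollary \ref{co:toappear} and in the construction of the gauge matrix $R(z)$ in Section \ref{sc:zeroesandpoles}, so the existence-uniqueness statement is the analytic shadow of the unique-modification Lemma \ref{lm:modif}.

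The main obstacle will be controlling everything \emph{uniformly in $\varepsilon$} for $\varepsilon$ small, since the singular points $a_i = \alpha_i + y_i/\varepsilon$ and $b_i = \beta_i + y_i/\varepsilon$ run off to infinity as $\varepsilon \to 0$, and the clusters $\{a_i, b_i\}$ separate at distance of order $1/\varepsilon$. The delicate point is that within each cluster the pair $(a_i, b_i)$ stays at bounded distance $\beta_i - \alpha_i$, so the Cauchy-type coefficient matrix degenerates in a structured block form: diagonal blocks of order $1$ and off-diagonal blocks of order $\varepsilon$. I would show that the limiting block-diagonal operator is invertible precisely under the stated hypothesis on $\lim_{\varepsilon\to0} w_{a;b|i}(\varepsilon)(w'_{a;b|i}(\varepsilon))^t / \langle w_{a;b|i}, w'_{a;b|i}\rangle = B_i/(\beta_i-\alpha_i)$, and then invoke a standard perturbation argument (the inverse depends continuously on $\varepsilon$ near a nonsingular limit) to conclude invertibility for all sufficiently small $\varepsilon \neq 0$. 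Once invertibility is uniform, the unique solution $A_{a;b}(z;\varepsilon)$ exists for small $\varepsilon$, and the explicit formulas obtained en route are precisely what feeds into the asymptotic expansions \eqref{eq:lim1}--\eqref{eq:lim3} in the remainder of the proof of Theorem \ref{th:lim}.
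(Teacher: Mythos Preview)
Your plan is sound but takes a genuinely different route from the paper. The paper argues by induction on $n$: it writes down the explicit elementary gauge matrix
\[
R_{a;b|n}(z;\varepsilon)=I+\frac{\beta_n-\alpha_n}{z-b_n}\cdot\frac{w_{a;b|n}(w'_{a;b|n})^t}{\langle w_{a;b|n},w'_{a;b|n}\rangle},
\]
which visibly has a simple zero at $a_n$, a simple pole at $b_n$, and the asymptotic $R(\zeta/\varepsilon)=I+\varepsilon B_n/(\zeta-y_n)+o(\varepsilon)$; it then sets $A_{a;b}=\tA\cdot R_{a;b|n}$, where $\tA$ carries the remaining $n-1$ zero/pole pairs and is furnished by the induction hypothesis applied to the conjugated data $R_{a;b|n}(a_i)w_i$, $R_{a;b|n}^{-t}(b_i)w'_i$. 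This constructive factorization is precisely what the rest of the proof of Theorem~\ref{th:lim} exploits, so the lemma doubles as the setup for the asymptotics~\eqref{eq:lim1}--\eqref{eq:lim3}. Your global ansatz $A=I+\sum_i C_i/(z-b_i)$ together with a perturbative inversion of the resulting linear system is a legitimate alternative, closer to a direct Riemann--Hilbert argument and perhaps more systematic; but two small gaps remain. First, the limit hypothesis controls only the ratio $w_i(w'_i)^t/\langle w_i,w'_i\rangle$, so to make your off-diagonal blocks genuinely $O(\varepsilon)$ you must first normalize (say $|w_i|=1$, $\langle w_i,w'_i\rangle=1$), else $\langle w_i,w'_j\rangle$ is uncontrolled. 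Second, you impose only $A(a_i)w_i=0$, not that $a_i$ is a \emph{simple} zero of $\det A$; this follows from a degree count on $\det A$ once you note that each rank-one residue forces the pole of $\det A$ at $b_i$ to be at most simple, but it should be said. The paper's inductive factorization sidesteps both points automatically and hands you the asymptotic expansion for free.
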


\begin{proof}
Uniqueness of $A_{a;b}(z;\epsilon)$ is almost obvious, since a
rational matrix is determined by its singularity data and asymptotic
behavior at infinity.

Let us prove existence. Proceed by induction in $n$. Set
\begin{equation}
R_{a;b|i}(z;\varepsilon)=I+\frac{\beta_i-\alpha_i}{z-b_i}\cdot\frac{w_{a;b|i}(\varepsilon)\cdot
(w'_{a;b|i}(\varepsilon))^t}{\langle
w_{a;b|i}(\varepsilon),w'_{a;b|i}(\varepsilon)\rangle}. \label{eq:R}
\end{equation}
By the hypotheses,
\begin{equation}
R_{a;b|i}(\zeta\varepsilon^{-1};\varepsilon)=I+\varepsilon\frac{B_i}{\zeta-y_i}+o(\varepsilon).
\label{eq:asR}
\end{equation}

We then construct $A_{a;b}(z;\epsilon)$ as
\begin{equation}
A_{a;b}(z;\epsilon)=\tA_{a;b}(z;\epsilon)\cdot
R_{a;b|n}(z;\varepsilon), \label{eq:A}
\end{equation}
where $\tA_{a;b}(z;\epsilon)$ is a matrix-valued function such that
$\tA_{a;b}(\infty;\epsilon)=I$, $\tA_{a;b}(z;\epsilon)$ has simple
zeroes at $a_1,\dots,a_{n-1}$ and simple poles at
$b_1,\dots,b_{n-1}$ (and no other singularities), and
$\ker(\tA_{a;b}(a_i;\epsilon))$ (resp. image of $\lim_{z\to
b_i}(z-b_i)\tA^t_{u;v}(z;\epsilon)$) is spanned by
$R_{a;b|n}(a_i;\epsilon)w_{a;b|i}(\epsilon)$ (resp.
$R_{a;b|n}^{-t}(b_i;\epsilon)w_{a;b|i}'(\epsilon)$). Such $\tA$
exists by the induction hypothesis.
\end{proof}

Let us now prove Theorem \ref{th:lim}. Clearly, \eqref{eq:lim1}
follows from \eqref{eq:asR}, \eqref{eq:A}.

Let us prove \eqref{eq:lim2}. Without losing generality, we can
assume $(u;v)=(a-e_i,b-e_i)$. Then
$A_{u;v}(z;\epsilon)=R_{a;b|i}(z+1;\epsilon)A_{a;b}(z;\epsilon)R_{a;b|i}(z,\epsilon)^{-1}$,
where $R_{a;b|i}$ is given by \eqref{eq:R}. We then have
\begin{gather*}
A_{u+e_i;v+e_i}(\zeta\varepsilon^{-1};\varepsilon)-
A_{u;v}(\zeta\varepsilon^{-1};\varepsilon)\\=
A_{a;b}(\zeta\epsilon^{-1};\epsilon)-R_{a;b|i}(\zeta\epsilon^{-1}+1;\epsilon)A_{a;b}(\zeta\epsilon^{-1};\epsilon)
R_{a;b|i}(\zeta\epsilon^{-1};\epsilon)^{-1}\\=
\biggl[I-R_{a;b|i}(\zeta\epsilon^{-1}+1;\epsilon)R_{a;b|i}
\zeta\epsilon^{-1};\epsilon)^{-1}\biggr]\\-
\biggl[R_{a;b|i}(\zeta\epsilon^{-1}+1;\epsilon)(A_{a;b}(\zeta\epsilon^{-1};\epsilon)-I)
R_{a;b|i}(\zeta\epsilon^{-1};\epsilon)^{-1}-(A_{a;b}(\zeta\epsilon^{-1};\epsilon)-I)\biggr].
\end{gather*}
Using \eqref{eq:lim1} and \eqref{eq:R}, we see that the first
bracket divided by $\epsilon^2$ (resp. the second bracket divided by
$\epsilon^2$) converges to the first (resp. second) term in the
right-hand side of \eqref{eq:lim2}.

It remains to prove \eqref{eq:lim3}. By \eqref{eq:D1tau}, we have
\begin{gather*}
D_i(\tau(u;v;\epsilon))=\frac{\alpha_i-\beta_i}{\langle
w_{u+e_i;v+e_i|i}(\epsilon),w'_{u+e_i;v+e_i|i}(\epsilon)\rangle}\,,\\
D^2_{i,j}(\tau(u;v;\epsilon))=\frac{\langle
w_{u+e_i;v+e_i|i}(\epsilon),w'_{u+e_i;v+e_i|i}(\epsilon)\rangle}{\langle
w_{u+e_i+e_j;v+e_i+e_j|i}(\epsilon),w'_{u+e_i+e_j;v+e_i+e_j|i}(\epsilon)\rangle}\,.
\end{gather*}

Without loss of generality, we can assume that
$(u;v)=(a-e_i-e_j;b-e_i-e_j)$. For $i\ne j$, we have
$$w_{a-e_j;b-e_j|i}(\epsilon)=R_{a;b|j}(a_i;\epsilon)w_{a;b|i}(\epsilon),\qquad
w'_{a-e_j;b-e_j|i}(\epsilon)=R^{-t}_{a;b|j}(b_i;\epsilon)w'_{a;b|i}(\epsilon).
$$
Then $$D^2_{i,j}(\tau(u;v;\epsilon))=\frac{\langle
R_{a;b|j}(a_i;\epsilon)w_{a;b|i}(\epsilon),R^{-t}_{a;b|j}(b_i;\epsilon)w'_{a;b|i}(\epsilon)\rangle}
{\langle w_{a;b|i}(\epsilon),w'_{a;b|i}(\epsilon)\rangle}\,.
$$
The difference of the numerator and the denominator equals
$$\langle
(R_{a;b|j}(a_i;\epsilon)-R_{a;b|j}(b_i;\epsilon))
w_{a;b|i}(\epsilon),R^{-t}_{a;b|j}(b_i;\epsilon)w'_{a;b|i}(\epsilon)\rangle.$$
By \eqref{eq:R},
$$
R_{a;b|j}(a_i;\epsilon)-R_{a;b|j}(b_i;\epsilon)=\epsilon^2(\beta_i-\alpha_i)\frac{B_j}{(y_i-y_j)^2}+o(\epsilon^2).
$$
This implies the statement.

Finally, suppose $i=j$. Then (cf. \eqref{eq:basisshift})
\begin{gather*}
w_{a-e_i;b-e_i|i}(\epsilon)=
R_{a;b|i}(a_i-1;\epsilon)A^{-1}_{a;b}(a_i-1;\epsilon)w_{a;b|i}(\epsilon),
\\
w'_{a-e_i;b-e_i|i}(\epsilon)=
R^{-t}_{a;b|i}(b_i-1;\epsilon)A^{t}_{a;b}(b_i-1;\epsilon)w'_{a;b|i}(\epsilon).
\end{gather*}
The statement now follows from the asymptotics
\begin{multline*}
R_{a;b|i}(a_i-1;\epsilon)A^{-1}_{a;b}(a_i-1;\epsilon)-
R_{a;b|i}(b_i-1;\epsilon)A^{-1}_{a;b}(b_i-1;\epsilon)\\=-
\epsilon^2(\beta_i-\alpha_i)\sum_{j\ne
i}\frac{B_j}{(y_i-y_j)^2}+o(\epsilon^2).
\end{multline*}
\qed

\section{Discrete Painlev\'e equations}\label{sc:Painleve}

In some special cases, isomonodromy transformation gives rise to
discrete Painlev\'e equations (\cite{JS,B1,BB,AB,Sa2}). In this
section, we evaluate the $\tau$-function for the two cases
considered in \cite{AB}.

\subsection{Difference $P_V$ and difference $P_{VI}$}
Suppose $\vL$ is a rank $2$ vector bundle on $\p1$, and that the
d-connection $\vA$ has simple zeroes at $a_1$, $a_2$, simple poles
at $b_1$, $b_2$, and no other singularities. Also, fix the `formal
type' of $\vA$ at infinity: there exists a trivialization
$\vR(z):\C^2\to\vL_z$ on the formal neighborhood of infinity such
that the matrix of $\vA$ with respect to $\vR$ equals
$$\vR(z+1)^{-1}\vA(z)\vR(z)=\begin{bmatrix}\rho_1(1+\frac{d_1+b_1+b_2+1}{z})&0\\0&\rho_2(1+\frac{d_2+b_1+b_2+1}{z})\end{bmatrix},
$$
for $\rho_1,\rho_2,d_1,d_2\in\C$. (This choice of parameters is used
to match the formulas of \cite{AB}.) Finally, suppose that
$$d_1+d_2+a_1+a_2+b_1+b_2=0.$$ This implies that $\deg(\vL)=-2$.

Assuming the parameters are generic, the moduli space of such
d-connections is a surface (of type $D_4^{(1)}$), see \cite{AB}. Let
us introduce the coordinates on this surface.

For generic $(\vL,\vA)$, there exists an isomorphism
$\vL\iso(\vO(-1))^2$ such that the matrix of $\vA$ is of the form
$$
A(z)=\begin{bmatrix}a_{11}(z)&O(z)\\
z-q&\rho_2z^2+\rho_2d_2z+O(1)\end{bmatrix}\cdot\frac{1}{(z-b_1)(z-b_2)}\,,
$$
where $a_{11}(z)$ is of the form
$a_{11}(z)=\rho_1z^2+\rho_1d_1z+O(1)$. $A(z)$ is uniquely determined
by $q$ and $a_{11}(q)$; the other coefficients can be found using
the singularity structure of $A(z)$. We take the (rational)
coordinates on the moduli space to be $q$ and
$$p=\frac{a_{11}(q)}{(q-a_2)(q-b_2)}\,.$$

Consider the isomonodromy deformation that shifts $a_1\mapsto
a_1-1$, $b_1\mapsto b_1-1$. According to our choice of parameters,
it also shifts $d_1\mapsto d_1+1$, $d_2\mapsto d_2+1$, because the
formal type at the infinity does not change.

\begin{proposition}[{\cite[Theorem B]{AB}}] The transformed coordinates $q'$, $p'$ are related to
$p$ and $q$ by the following equations (difference $P_V$):
\begin{equation*}
\begin{cases}
q'+q=a_2+b_2+\dfrac{\rho_1(d_1+a_2+b_2)}{p-\rho_1}+
\dfrac{\rho_2(d_2+a_2+b_2+1)}{p-\rho_2}\,,
\\
p'p=\dfrac{(q'-a_1+1)(q'-b_1+1)}{(q'-a_2)(q'-b_2)}\cdot\rho_1\rho_2.
\end{cases}
\end{equation*}
\label{pp:dPV} \qed
\end{proposition}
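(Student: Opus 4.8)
The plan is to compute the effect of the isomonodromy deformation directly in coordinates, following the explicit description of the modification in Section~\ref{sc:zeroesandpoles}. The deformation shifting $a_1\mapsto a_1-1$, $b_1\mapsto b_1-1$ is realized by a rational gauge matrix $R(z)=I+\frac{R_0}{z-b_1}$ with $R_0$ of rank one, so that $A'(z)=R(z+1)A(z)R(z)^{-1}$. First I would pin down $R_0$: by the formula in Section~\ref{sc:zeroesandpoles}, $R_0=\frac{b_1-a_1}{\langle w,w'\rangle}\,w\cdot(w')^t$, where $w$ spans $\ker A(a_1)$ and $w'$ spans the image of $\lim_{z\to b_1}(z-b_1)A^t(z)$. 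Both $w$ and $w'$ can be read off from the explicit shape of $A(z)$: the kernel condition $A(a_1)w=0$ and the residue condition at $b_1$ give two linear equations in the entries, which I can solve in terms of $q$, $p$, and the fixed parameters $\rho_i,d_i,a_i,b_i$.

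The main computation is then to extract $q'$ and $p'$ from $A'(z)$. Since $A'(z)$ has its $(2,1)$-entry of the form $(z-q')/\big((z-b_1+1)(z-b_2)\big)$ after the shift, $q'$ is determined as the zero of the transformed $(2,1)$-entry, and $p'=a_{11}'(q')/\big((q'-a_1)(q'-b_2)\big)$ (with the shifted singular data $a_1\mapsto a_1-1$, $b_1\mapsto b_1-1$). Concretely I would conjugate the given matrix $A(z)$ by $R(z)$, keep track only of the entries needed to locate the new simple zero of the $(2,1)$-slot and to evaluate $a_{11}'$ there, and match against the two asserted relations. The key bookkeeping device is that $A(z)$ is \emph{uniquely} determined by $q$ and $a_{11}(q)$ together with the singularity structure and the formal type at infinity; this lets me avoid carrying all coefficients and instead recover the off-diagonal and lower-order diagonal terms from those constraints whenever they are needed.

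The hard part will be the residue/asymptotic algebra near the moving singularities: I must verify that the conjugate $A'(z)$ genuinely has the prescribed singularity structure (a simple zero at $a_1-1$, a simple pole at $b_1-1$, and nothing new at the other lattice points), which is guaranteed abstractly by Lemma~\ref{lm:modif} but has to be made consistent with the coordinate ansatz so that $q'$ and $p'$ are well defined by the same normal form. In practice this amounts to checking that the pole of $R(z)$ at $b_1$ and the zero structure of $A(z)$ at $a_1$ conspire to cancel correctly, which is exactly the content of the non-degeneracy assumption $\langle w,w'\rangle\ne0$.

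Alternatively, since this is quoted verbatim as Theorem~B of \cite{AB}, the cleanest route is simply to invoke that result: the d-connection $\vA$ here is precisely the one whose moduli space is identified in \cite{AB} with the $D_4^{(1)}$ Sakai surface, the coordinates $(q,p)$ agree with those fixed there, and the elementary isomonodromy transformation shifting $(a_1,b_1)\mapsto(a_1-1,b_1-1)$ is the one whose action on $(q,p)$ is computed in \cite{AB} to be the stated $dPV$ map. I would therefore present the statement as a direct citation, reserving the gauge-theoretic computation above only as a consistency check that our normalization of $(q,p)$ matches that of \cite{AB}.
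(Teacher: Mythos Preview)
Your proposal is correct, and your final paragraph matches exactly what the paper does: the proposition carries a \qed immediately after the statement and is offered purely as a citation of \cite[Theorem~B]{AB}, with no proof given in this paper. Your gauge-theoretic computation via $R(z)=I+R_0/(z-b_1)$ is a reasonable way to verify the result independently, but it goes beyond what the paper itself provides; here the statement functions only as imported background.
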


Now consider d-connections with different singularity structure.
Namely, suppose $\vA$ has simple zeroes at $a_1$, $a_2$, $a_3$,
simple poles at $b_1$, $b_2$, $b_3$, and no other singularities.
Assume that in the formal neighborhood of infinity, there exists a
trivialization $\vR(z):\C^2\to\vL_z$ such that the matrix of $\vA$
with respect to $\vR$ equals
$$\vR(z+1)^{-1}\vA(z)\vR(z)=\begin{bmatrix}1+\frac{d_1+b_1+b_2+b_3+1}{z}&0\\0&1+\frac{d_2+b_1+b_2+b_3+1}{z}\end{bmatrix},
$$
for $d_1,d_2\in\C$. Finally, suppose that
$$d_1+d_2+a_1+a_2+a_3+b_1+b_2+b_3=0.$$ This implies that $\deg(\vL)=-2$.

For generic $(\vL,\vA)$, there exists an isomorphism
$\vL\iso(\vO(-1))^2$ such that the matrix of $\vA$ is of the form
$$
A(z)=\begin{bmatrix}a_{11}(z)&O(z)\\
z-q&z^3+d_2z^2+O(z)\end{bmatrix}\cdot\frac{1}{(z-b_1)(z-b_2)(z-b_3)}\,,
$$
where $a_{11}(z)$ is of the form $z^3+d_1z^2+O(z)$. Then $A(z)$ is
determined by $q$ and $a_{11}(q)$. It is more convenient to work in
coordinates $q$ and
$$r=\frac{(q-a_2)(q-a_3)(q-b_2)(q-b_3)}{a_{11}(q)}-q.$$

As above, consider the isomonodromy deformation that shifts
$a_1\mapsto a_1-1$, $b_1\mapsto b_1-1$; it also shifts $d_1\mapsto
d_1+1$, $d_2\mapsto d_2+1$.

\begin{proposition}[{\cite[Theorem F]{AB}}] The transformed coordinates $q'$, $p'$ are related to
$p$ and $q$ by the following equations (difference $P_{VI}$):
\begin{equation*}
\begin{cases}
(q+r)(q'+r) =\dfrac{(r+a_2)(r+a_3)(r+b_2)(r+b_3)}
{(r+1-a_1-b_1-d_1)(r-a_1-b_1-d_2)}\,,
\\
\\
(q'+r)(q'+r')= \dfrac{(q' - a_2) (q' - a_3) (q' - b_2) (q' -
b_3)}{(q' - (a_1-1)) (q'-(b_1-1))}\,.
\end{cases}
\end{equation*}
\label{pp:dPVI} \qed
\end{proposition}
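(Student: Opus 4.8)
The plan is to establish both relations exactly as one proves the difference $P_V$ case in Proposition~\ref{pp:dPV}: realize the isomonodromy transformation $a_1\mapsto a_1-1$, $b_1\mapsto b_1-1$ as an explicit rational gauge transformation and then read off the transformed coordinates. Since only the single zero $a_1$ and the single pole $b_1$ move (each by $-1$), Section~\ref{sc:zeroesandpoles} supplies the gauge matrix in closed form,
\[
R(z)=I+\frac{R_0}{z-b_1},\qquad R_0=\frac{b_1-a_1}{\langle w,w'\rangle}\,w\cdot(w')^t,
\]
where $w$ spans $\ker A(a_1)$ and $w'$ spans the image of $\lim_{z\to b_1}(z-b_1)A^t(z)$, and the transformed d-connection is $A_{u;v}(z)=R(z+1)A(z)R(z)^{-1}$ with $u=(a_1-1,a_2,a_3)$, $v=(b_1-1,b_2,b_3)$. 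This is the computation behind \cite[Theorem F]{AB}.

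First I would reconstruct $A(z)$ completely from $q,r$ and the parameters. Writing $A(z)=N(z)/\pi_b(z)$ with $\pi_b(z)=(z-b_1)(z-b_2)(z-b_3)$, $\pi_a(z)=(z-a_1)(z-a_2)(z-a_3)$, and $N(z)$ the numerator of the standard form (monic cubics $a_{11},a_{22}$ on the diagonal, a linear $(1,2)$-entry, and $(2,1)$-entry $z-q$), the standard form fixes the leading terms, the formal type at infinity fixes the $z^2$-coefficients $d_1,d_2$, and the identity $\det N(z)=\pi_a(z)\pi_b(z)$ together with the rank-one conditions at the unshifted points $a_2,a_3,b_2,b_3$ determines every remaining coefficient. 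The consequences I expect to use are the triangularity identity $a_{11}(q)\,a_{22}(q)=\pi_a(q)\pi_b(q)$ (valid since $N(q)$ is upper triangular) and the resulting coordinate formula
\[
q+r=\frac{(q-a_2)(q-a_3)(q-b_2)(q-b_3)}{a_{11}(q)}=\frac{a_{22}(q)}{(q-a_1)(q-b_1)},
\]
from which the kernel vector $w\propto(a_{22}(a_1),\,q-a_1)^t$ and the residue covector $w'\propto(a_{11}(b_1),\,a_{12}(b_1))^t$ become explicit.

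Next I would form $R_0$, compute $A_{u;v}(z)=R(z+1)A(z)R(z)^{-1}$, and apply the unique constant gauge that restores the standard form, whose diagonal $z^2$-coefficients are now $d_1+1,d_2+1$. By construction $A_{u;v}$ has zeroes at $a_1-1,a_2,a_3$ and poles at $b_1-1,b_2,b_3$, so the root of its $(2,1)$-entry is $q'$ and the primed analogue of the coordinate formula defines $r'$. Reading these off expresses $q'$ and $r'$ as rational functions of $q,r$ and the parameters, and the two displayed identities are then verified by direct substitution. Their triangular shape is the guide: the first has a right-hand side depending on $r$ alone, so it determines $q'$, with the denominator $(r+1-a_1-b_1-d_1)(r-a_1-b_1-d_2)$ recording the formal exponents $d_1,d_2$ at infinity (shifted by the unit amount that accompanies $b_1\mapsto b_1-1$); the second then recovers $r'$, the surviving combination $q'+r$ reflecting that the old coordinate $r$ is transported intact through the modification.

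The main obstacle is precisely this bookkeeping: recovering all coefficients of $N(z)$ from $(q,r)$ through the rank-one conditions, carrying out the constant renormalization of $A_{u;v}(z)$, and confirming that $r$ reappears unchanged inside $q'+r$ in \emph{both} relations rather than in some more complicated expression. I would try to tame the computation by exploiting the symmetry that interchanges the two diagonal entries (equivalently, passing to the transposed d-connection), under which $d_1\leftrightarrow d_2$ and the two coordinate formulas above are exchanged, in the hope of obtaining the two relations as two facets of a single identity; absent such a simplification, they follow by the same direct substitution that settles the $dPV$ case.
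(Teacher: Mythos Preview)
The paper does not prove this proposition; it is quoted verbatim from \cite[Theorem F]{AB} and closed with a bare \qed. So there is no ``paper's own proof'' to compare against beyond the citation.

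That said, your outline is an accurate reconstruction of how the result is actually obtained in \cite{AB}: parametrize $A(z)$ explicitly from $(q,r)$ via the determinant constraint $\det N=\pi_a\pi_b$ and the rank-one residue conditions, take the gauge matrix $R(z)=I+R_0/(z-b_1)$ from Section~\ref{sc:zeroesandpoles}, form $R(z+1)A(z)R(z)^{-1}$, restore standard form by a constant conjugation, and read off $(q',r')$. Your intermediate identities---the triangularity $a_{11}(q)a_{22}(q)=\pi_a(q)\pi_b(q)$ and the dual expressions $q+r=(q-a_2)(q-a_3)(q-b_2)(q-b_3)/a_{11}(q)=a_{22}(q)/((q-a_1)(q-b_1))$---are correct and are exactly what makes the computation close. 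The symmetry you propose (transposition exchanging $d_1\leftrightarrow d_2$ and the two formulas for $q+r$) is also the right organizing principle. The only honest remark is that what you call the ``main obstacle'' is in fact the entire content of the proof: it is a straightforward but lengthy symbolic verification, which is why the present paper imports it rather than reproducing it.
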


\begin{remark*} These equations previously appeared in \cite{GRO} as the asymmetric dPIV
equation; see also references therein. The equivalence of
\cite[Theorem F]{AB} and the equations is explained in the
introduction to \cite{AB}.
\end{remark*}

\subsection{Tau-functions}
Following the recipe of Section \ref{sc:zeroesandpoles}, we can
write the second (logarithmic difference) derivative of the
tau-function in the direction of the above isomonodromy
transformations. The computations are somewhat tedious, but the
answer is remarkably simple:

\begin{theorem} In the settings of Proposition \ref{pp:dPV},
\begin{align*}
D^2\tau=\frac{\tau''\cdot\tau}{(\tau')^2}&=\frac{(p'-\rho_1)(\rho_1(q'-a_1+1)(q'-b_1+1)-p'(q'-a_2)(q'-b_2))}
{\rho_1(a_2-a_1+1)(b_2-b_1+1)p'}\\
&=\frac{(p'-\rho_1)(p-\rho_2)(q'-a_2)(q'-b_2)}
{\rho_1\rho_2(a_2-a_1+1)(b_2-b_1+1)}\,.
\end{align*}
Here $\tau'$ and $\tau''$ correspond to shifts
$(a_1,b_1)\mapsto(a_1-1,b_1-1)$ and $(a_1,b_1)\mapsto(a_1-2,b_1-2)$,
respectively. \qed \label{th:taudPV}
\end{theorem}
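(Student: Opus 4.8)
The plan is to compute $D^2\tau$ directly using the formula from Section \ref{sc:zeroandpole}, specialized to the rank-$2$, two-singularity setting of Proposition \ref{pp:dPV}. By the recipe there, the first logarithmic derivative $\tau'/\tau$ for the shift $(a_1,b_1)\mapsto(a_1-1,b_1-1)$ equals (up to the normalization built into the choice of $w,w'$) the pairing $\langle w, w'\rangle/(u_1-v_1)$, where $w$ spans $\ker(A_{u;v}(u_1))$ and $w'$ spans the image of $\lim_{z\to v_1}(z-v_1)A_{u;v}^t(z)$, as in \eqref{eq:D1tau}. The second ratio $D^2\tau=\tau''\tau/(\tau')^2$ is then a genuine number, obtained by forming this pairing at the parameter point $(a_1-1,b_1-1)$ (giving $\tau''/\tau'$) and dividing by the pairing at $(a_1,b_1)$ (giving $\tau'/\tau$). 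So the whole computation reduces to writing down $A(z)$ explicitly in the coordinates $(q,p)$, extracting the kernel and cokernel vectors at the relevant zero/pole, and tracking how they transform under the single elementary shift.

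The key steps, in order, would be as follows. First I would write $A(z)$ in the explicit matrix form given just before Proposition \ref{pp:dPV}, using that the entries are pinned down by $q$, $a_{11}(q)=\rho_1(q-a_2)(q-b_2)p$, and the prescribed leading behavior together with the singularity data (simple zeroes at $a_1,a_2$, simple poles at $b_1,b_2$, formal type at $\infty$). Second, at the zero $z=a_1$ I would solve $A(a_1)w=0$ to get the kernel vector $w$, and at the pole $z=b_1$ I would compute the residue $\lim_{z\to b_1}(z-b_1)A^t(z)$ and read off its image to get $w'$; both are two-component vectors whose entries are rational in $q,p$ and the $a_i,b_i,\rho_i,d_i$. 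Third, I would compute the pairing $\langle w,w'\rangle$ and divide by $a_1-b_1$, obtaining $\tau'/\tau$ as an explicit rational expression. Fourth, I would invoke Proposition \ref{pp:dPV} to substitute the shifted coordinates $(q',p')$ for the once-transformed connection and repeat the pairing computation to get $\tau''/\tau'$. Finally, taking the ratio and simplifying using the $dPV$ relations between $(q,p)$ and $(q',p')$, I would aim to collapse everything to the stated closed form, and then verify the second equality by using the relation $p'p=\rho_1\rho_2(q'-a_1+1)(q'-b_1+1)/((q'-a_2)(q'-b_2))$ to rewrite $\rho_1(q'-a_1+1)(q'-b_1+1)$ in terms of $p'p(q'-a_2)(q'-b_2)/\rho_2$.

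The main obstacle I anticipate is the bookkeeping in the compatible choice of bases $w_{u;v|i}$, $w'_{u;v|i}$ across the shift, as dictated by the compatibility prescription of Section \ref{sc:zeroesandpoles}: the second ratio is basis-independent, but to extract it I must either track the transport of $w,w'$ under the gauge matrix $R(z)$ correctly, or else arrange the computation so that the ambiguous normalizations cancel. The cleanest route is probably to form $\tau''\tau/(\tau')^2$ as a single ratio of two pairings and observe that the scalar freedom in each $w,w'$ enters $\tau'/\tau$ and $\tau''/\tau'$ in a way that cancels in the quotient, so that I may choose any convenient representatives (e.g. fixing one component to $1$) at each of the two parameter points. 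Once the representatives are fixed, the remaining difficulty is purely algebraic: simplifying a ratio of quadratic-in-$(q,p)$ expressions using the $dPV$ equations until the spurious factors cancel and the symmetric form $(p'-\rho_1)(p-\rho_2)(q'-a_2)(q'-b_2)$ emerges. I expect the factor $(a_2-a_1+1)(b_2-b_1+1)$ in the denominator to arise from the shifts $u_1-v_1=a_1-b_1$ evaluated at the two successive parameter points, reflecting the $(u_i+1-u_j)$-type denominators seen in Proposition \ref{pp:tauderivative}.
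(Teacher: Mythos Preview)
Your approach is exactly the one the paper takes: the text preceding Theorem~\ref{th:taudPV} says only ``Following the recipe of Section~\ref{sc:zeroesandpoles}, we can write the second (logarithmic difference) derivative of the tau-function\dots\ The computations are somewhat tedious, but the answer is remarkably simple,'' and the theorem carries a bare \qed. So the paper gives no argument beyond the one you outline: write $A(z)$ explicitly in the $(q,p)$-coordinates, extract $w\in\ker A(a_1)$ and $w'$ from the residue at $b_1$, form the pairing of \eqref{eq:D1tau}, do the same after one shift, and simplify using the $dPV$ relations. (Your initial reference to Section~\ref{sc:zeroandpole} is a slip; the formulas you actually invoke are those of Section~\ref{sc:zeroesandpoles}.)

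There is, however, one genuine pitfall in your ``cleanest route.'' You write that ``the scalar freedom in each $w,w'$ enters $\tau'/\tau$ and $\tau''/\tau'$ in a way that cancels in the quotient, so that I may choose any convenient representatives \dots\ at each of the two parameter points.'' That is not right. The second ratio $D^2\tau$ is indeed basis-independent, but only when the bases at the two parameter points are related by the transport prescribed in Section~\ref{sc:zeroesandpoles} (the analogue of \eqref{eq:basisshift}): $w\mapsto R(a_1-1)A^{-1}(a_1-1)w$ and $w'\mapsto R^{-t}(b_1-1)A^t(b_1-1)w'$, with $R$ the gauge matrix of the shift. If instead you normalize $w$ and $w'$ independently at the two points (say by fixing a component to $1$ each time), the quotient $\langle\tilde w,\tilde w'\rangle/\langle w,w'\rangle$ acquires an uncontrolled scalar and will not equal $D^2\tau$. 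So you must carry out your ``option~1'' and track the transport through $R(z)$; the freedom that does cancel is only the \emph{initial} choice of $w,w'$ at a single reference point. Relatedly, your guess that $(a_2-a_1+1)(b_2-b_1+1)$ comes from the denominators $u_1-v_1=a_1-b_1$ cannot be right, since that quantity is the same before and after the shift and drops out of the ratio; those factors emerge from the transport and the pairing themselves.
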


\begin{theorem} In the settings of Proposition \ref{pp:dPVI},
\begin{align*}
D^2\tau=\frac{\tau''\cdot\tau}{(\tau')^2}&=\frac{1}
{(a_1-a_2-1)(a_1-a_3-1)(b_1-b_2-1)(b_1-b_3-1)}\\
&\times \frac{r'-a_1-b_1-d_2+1}{q'+r'}\cdot
\bigl((q'-a_2)(q'-a_3)(q'-b_2)(q'-b_3)\\&-(q'-a_1+1)(q'-b_1+1)(q'+d_1+a_1+b_1-1)(q'+r')\bigr)\\
&=\frac{(r'-a_1-b_1-d_2+1)(r-a_1-b_1-d_1+1)(q'-a_1+1)(q'-b_1+1)}
{(a_1-a_2-1)(a_1-a_3-1)(b_1-b_2-1)(b_1-b_3-1)}\,.
\end{align*} Here $\tau'$ and $\tau''$ correspond to shifts
$(a_1,b_1)\mapsto(a_1-1,b_1-1)$ and $(a_1,b_1)\mapsto(a_1-2,b_1-2)$,
respectively. \qed \label{th:taudPVI}
\end{theorem}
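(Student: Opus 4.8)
The plan is to reduce the second ratio to a quotient of the pairings $\langle w,w'\rangle$ furnished by the recipe of Section~\ref{sc:zeroesandpoles}, and then to evaluate those pairings in the explicit coordinates $(q,r)$ of Proposition~\ref{pp:dPVI}. Write $\tau=\tau(a_1,b_1)$, $\tau'=\tau(a_1-1,b_1-1)$ and $\tau''=\tau(a_1-2,b_1-2)$ (the other singular points being fixed). First I would apply the first-derivative formula \eqref{eq:D1tau} at the base point $(a;b)$, giving $\tau'/\tau=\langle w,w'\rangle/(a_1-b_1)$ with $w$ spanning $\ker A(a_1)$ and $w'$ spanning the image of $\lim_{z\to b_1}(z-b_1)A^t(z)$; then I would apply it again at the once-shifted point, giving $\tau''/\tau'=\langle w^{(1)},w'^{(1)}\rangle/\big((a_1-1)-(b_1-1)\big)$, where $w^{(1)},w'^{(1)}$ are the analogous vectors for the transformed d-connection $A_{a-e_1;b-e_1}$ at its singular points $a_1-1$, $b_1-1$. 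Since $(a_1-1)-(b_1-1)=a_1-b_1$, the spacings cancel and
\[
D^2\tau=\frac{\tau''\cdot\tau}{(\tau')^2}=\frac{\langle w^{(1)},w'^{(1)}\rangle}{\langle w,w'\rangle}.
\]

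Next I would compute $\langle w,w'\rangle$ as an explicit rational function $f(q,r)$ of the coordinates and the parameters $a_i,b_i,d_1,d_2$. Using the matrix form of $A(z)$ displayed before Proposition~\ref{pp:dPVI}, the vector $w\in\ker A(a_1)$ is read off from the entries of $A(a_1)$ (whose determinant has a simple zero at $a_1$), and $w'$ is the row direction $\eta$ of the rank-one residue $\res_{b_1}A=c\,\eta^t$. The actual work at this stage is to express the undetermined coefficients of $A(z)$---the $O(z)$ and $O(1)$ entries---through $q$, $r$ and the parameters; these are forced by the requirement that $\det A(z)=\prod(z-a_i)/\prod(z-b_j)$ (simple zeroes at $a_1,a_2,a_3$, simple poles at $b_1,b_2,b_3$) together with the prescribed formal type at infinity, which is exactly the information packaged in $(q,r)$.

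The transformed d-connection $A_{a-e_1;b-e_1}$ has the same singularity type, with zeroes $a_1-1,a_2,a_3$, poles $b_1-1,b_2,b_3$, formal exponents $d_1+1,d_2+1$, and coordinates $(q',r')$. Hence $\langle w^{(1)},w'^{(1)}\rangle$ is the same function $f$ evaluated on the shifted data, so that
\[
D^2\tau=\frac{f(q',r';\,a_1-1,a_2,a_3,b_1-1,b_2,b_3,d_1+1,d_2+1)}{f(q,r;\,a_1,a_2,a_3,b_1,b_2,b_3,d_1,d_2)}.
\]
I would then substitute the $dPVI$ relations of Proposition~\ref{pp:dPVI}, which express $(q',r')$ through $(q,r)$, and simplify. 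I expect one round of simplification to produce the middle, two-line expression of the theorem, and a second use of the $dPVI$ relations (to eliminate the bracketed cubic in $q'$) to collapse it to the fully factored final form; the equivalence of the two displayed right-hand sides is itself a consequence of dPVI.

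The main obstacle is the algebra of this last step: the individual pairings $f$ are not themselves simple, and the dramatic cancellation to a product of four linear factors over a constant rests on repeated, nonobvious use of the $dPVI$ relations. To keep it tractable I would avoid blind expansion and instead organize the computation around the divisor of the answer: I would locate the zeroes and poles in $q'$ of the ratio $\langle w^{(1)},w'^{(1)}\rangle/\langle w,w'\rangle$---equivalently, the loci where a modification of the d-connection produces a nontrivial bundle---check that they coincide with those of the factors $(q'-a_1+1)$, $(q'-b_1+1)$, $(r-a_1-b_1-d_1+1)$, $(r'-a_1-b_1-d_2+1)$, and only then pin down the overall constant $\big[(a_1-a_2-1)(a_1-a_3-1)(b_1-b_2-1)(b_1-b_3-1)\big]^{-1}$ by matching a single value.
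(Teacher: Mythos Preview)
Your outline is precisely what the paper does---it offers no argument beyond ``following the recipe of Section~\ref{sc:zeroesandpoles}, the computations are somewhat tedious'' and marks the theorem with \qed---so reducing $D^2\tau$ to a ratio of pairings and grinding through the $(q,r)$-algebra is exactly the intended route.

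There is, however, a genuine slip in the reduction. You write $D^2\tau=\langle w^{(1)},w'^{(1)}\rangle/\langle w,w'\rangle$ and then assert that $\langle w^{(1)},w'^{(1)}\rangle$ is ``the same function $f$ evaluated on the shifted data''. That second step is not valid as stated. The vectors $w^{(1)},w'^{(1)}$ entering the formula for $\tau''/\tau'$ are not arbitrary bases of the kernel and residue line for $A_{a-e_1;b-e_1}$; they are the \emph{specific} bases produced from $w,w'$ by the compatibility rule of Section~\ref{sc:zeroesandpoles} (the analogue of \eqref{eq:basisshift}), for instance $w^{(1)}=R(a_1-1)A^{-1}(a_1-1)w$ with $R$ the gauge of the modification. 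If instead you read $w^{(1)},w'^{(1)}$ afresh from the normal form of $A_{a-e_1;b-e_1}$ using whatever normalization you used at the first step (fixing a component, say), the pairing will differ from the correct one by a scalar that does \emph{not} cancel in the ratio. The constant conjugation needed to bring $A_{a-e_1;b-e_1}$ to normal form is harmless---it preserves $\langle\cdot,\cdot\rangle$---but the rescaling hidden in your normalization of the kernel and residue directions is not.

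The repair is small but mandatory: compute $\langle w^{(1)},w'^{(1)}\rangle$ directly from the shift rule, i.e.\ evaluate
\[
\bigl\langle\, R(a_1-1)A^{-1}(a_1-1)w,\ R^{-t}(b_1-1)A^{t}(b_1-1)w'\,\bigr\rangle
\]
as a rational function of $(q,r)$ and the parameters, form the ratio with $\langle w,w'\rangle$, and only then invoke the $dPVI$ relations to pass to $(q',r')$. Your divisor-matching strategy for the final simplification is perfectly sound once this input is corrected.
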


\begin{remark*} Propositions \ref{pp:dPV}, \ref{pp:dPVI}, and
Theorems \ref{th:taudPV}, \ref{th:taudPVI} remain valid in various
degenerate situations. For example, zero at $z=a_1$ and pole at
$z=b_1$ can coalesce, giving a singularity of the type considered in
Section \ref{sc:zeroandpole}. This degeneration is used in Section
\ref{sc:Hahn}.
\end{remark*}

\section{Gap probabilities}\label{sc:probability}

The goal of this section is to show that tau-functions naturally
arise as the gap probabilities in the discrete probabilistic models
of random matrix type.

\subsection{}
Fix a finite set $\cX\subset\C$ (the phase space), and two families
of weight functions
$\omega_{1,1},\dots,\omega_{1,p},\omega_{2,1},\dots,\omega_{2,q}$
defined on $\cX$. Assume the weight functions have no zeroes on
$\cX$. Also, fix two multi-indices ${\mathbf
n}=(n_1,\dots,n_p),{\mathbf m}=(m_1,\dots,m_q)$ such that
$$N=\sum_{i=1}^p n_i=\sum_{i=1}^qm_i.$$

Set
$$
F(x_1,\dots,x_N)=\det[\phi_i(x_j)]_{i,j=1}^N\det[\psi_i(x_j)]_{i,j=1}^N,
$$
where \begin{align*} \{\phi_i(x)\mid
i=1,\dots,N\}&=\{\omega_{1,i}(x)x^j\mid
i=1,\dots,p,j=0,\dots,n_i-1\}, \\
\{\psi_i(x)\mid i=1,\dots,N\}&=\{\omega_{2,i}(x)x^j\mid
i=1,\dots,q,j=0,\dots,m_i-1\}.
\end{align*}

We always make the following basic assumption:
\begin{equation}
Z=\sum_{x_1,\dots,x_N\in\cX}F(x_1,\dots,x_N)\ne 0.
\label{eq:basicassumption}
\end{equation}
\begin{remark*} Let $\cF$ (resp. $\cG$) be the subspace of $\ell^2(\cX)$ spanned by
$\phi_i$'s (resp. $\psi_i$'s). Then \eqref{eq:basicassumption} is
equivalent to $\dim(\cF)=\dim(\cG)=N$ and $\cF\cap\cG^\perp=\{0\}$.
\end{remark*}

\begin{lemma} Let $K(x,y)$ be the matrix of the
projection in $\ell^2(\cX)$ onto $\cF$ parallel to $\cG^\perp$:
$$
K(x,y)=\sum_{i,j=1}^NM_{ij}\phi_i(x)\psi_j(y)\quad\text{for}\quad
M=\Vert\langle\phi_i,\psi_j\rangle\Vert^{-t}_{i,j=1,\dots,N}.$$ Then
for any subset $\cY\subset\cX$
$$
\frac{1}{Z}\sum_{x_1,\dots,x_N\in\cY}F(x_1,\dots,x_N)=
\det\left((1-K)\bigl|_{\ell^2(\cX-\cY)}\right).
$$
\label{lm:gapprobability}
\end{lemma}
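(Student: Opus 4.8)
The plan is to express both the normalization $Z$ and the numerator $\sum_{x_1,\dots,x_N\in\cY}F$ as Gram determinants, and then to convert their ratio into the restricted determinant of $1-K$ by a Sylvester-type identity. Throughout, for a subset $E\subseteq\cX$ I write $G^E$ for the $N\times N$ matrix $G^E_{ij}=\sum_{x\in E}\phi_i(x)\psi_j(x)$, and abbreviate $G=G^\cX$, the full Gram matrix $G_{ij}=\langle\phi_i,\psi_j\rangle$; by definition $M=(G^t)^{-1}$, and $G=G^\cY+G^{\cX-\cY}$.

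First I would invoke the discrete Andr\'eief (Cauchy--Binet) identity: for every $E\subseteq\cX$,
\begin{equation*}
\sum_{x_1,\dots,x_N\in E}\det[\phi_i(x_j)]_{i,j=1}^N\det[\psi_i(x_j)]_{i,j=1}^N=N!\,\det G^E.
\end{equation*}
This is obtained by expanding both determinants over the symmetric group, factoring the product over the variables $x_j$, and resumming. With $E=\cX$ it gives $Z=N!\det G$, so that \eqref{eq:basicassumption} is equivalent to $\det G\ne0$; with $E=\cY$ it gives $\sum_{x_1,\dots,x_N\in\cY}F=N!\det G^\cY$. Consequently
\begin{equation*}
\frac1Z\sum_{x_1,\dots,x_N\in\cY}F=\frac{\det G^\cY}{\det G}=\det\big(I_N-G^{-1}G^{\cX-\cY}\big),
\end{equation*}
where the last step uses $G^\cY=G-G^{\cX-\cY}$.

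Next I would pass to the complementary block of $K$ using $\det(I-AB)=\det(I-BA)$. Let $\Phi_{\cX-\cY}=[\phi_i(x)]_{x\in\cX-\cY,\,i}$ and $\Psi_{\cX-\cY}=[\psi_j(x)]_{x\in\cX-\cY,\,j}$, so that $G^{\cX-\cY}=\Phi_{\cX-\cY}^{\,t}\,\Psi_{\cX-\cY}$; the explicit formula for $K$ shows that its $(\cX-\cY)\times(\cX-\cY)$ block equals $\Phi_{\cX-\cY}\,M\,\Psi_{\cX-\cY}^{\,t}$. Hence
\begin{equation*}
\det\big((1-K)\big|_{\ell^2(\cX-\cY)}\big)=\det\big(I-\Phi_{\cX-\cY}\,M\,\Psi_{\cX-\cY}^{\,t}\big)=\det\big(I_N-\Psi_{\cX-\cY}^{\,t}\,\Phi_{\cX-\cY}\,M\big).
\end{equation*}
Since $\Psi_{\cX-\cY}^{\,t}\Phi_{\cX-\cY}=(G^{\cX-\cY})^t$ and $M=(G^t)^{-1}$, the right-hand side equals $\det\big(I_N-(G^{-1}G^{\cX-\cY})^t\big)=\det\big(I_N-G^{-1}G^{\cX-\cY}\big)$, which matches the last display above, completing the argument.

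I do not anticipate a real obstacle; the substance is the bookkeeping of transposes hidden in $M=(G^t)^{-1}$ and the identification of $(1-K)|_{\ell^2(\cX-\cY)}$ with the complementary Gram data. The one point to handle carefully is that $\langle\cdot,\cdot\rangle$ must be the bilinear pairing $\langle f,g\rangle=\sum_{x\in\cX}f(x)g(x)$ (no complex conjugation): it is this choice that simultaneously produces $Z=N!\det G$ and makes $K$, written with the full matrices $\Phi,\Psi$ as $\Phi M\Psi^t$, the projection onto $\cF$ along $\cG^\perp$. As a sanity check I would verify $K\phi_k=\phi_k$ and $Kf=0$ for $f\in\cG^\perp$, which also re-derives the well-definedness of that projection claimed in the Remark, equivalently $\det G\ne0$.
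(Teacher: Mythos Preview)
Your argument is correct and is precisely the ``standard argument in the random matrix theory'' that the paper invokes without spelling out: Andr\'eief/Cauchy--Binet to reduce both sums to Gram determinants, then the Sylvester (Weinstein--Aronszajn) identity $\det(I-AB)=\det(I-BA)$ to pass from the $N\times N$ side to the $\ell^2(\cX-\cY)$ side. The transpose bookkeeping with $M=(G^t)^{-1}$ is handled correctly, and your remark that $\langle\cdot,\cdot\rangle$ must be the bilinear (not Hermitian) pairing is exactly the right caveat.
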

The proof is a standard argument in the random matrix theory.

\subsection{}
Consider on $\p1$ the vector bundle
$$\vL_\varnothing=\vO(n_1-1)\oplus\dots\oplus\vO(n_p-1)\oplus\vO(-m_1-1)\oplus\dots
\oplus\vO(-m_q-1).$$ For any subset $\cY\subset\cX$, define a
modification $\vL_\cY$ of $\vL_\varnothing$ by
\begin{enumerate}
\item $\vL_\cY$ and $\vL_\varnothing$ coincide on $\p1\setminus\cY$;

\item Near any $y\in\cY$, sections of $\vL_\cY$ are rational sections
$$s=(s_{1,1},\dots,s_{1,p};s_{2,1},\dots,s_{2,q})^t\in\vL_\varnothing$$
such that $s_{1,i}$ is regular at $y$ ($i=1,\dots,p$), $s_{2,i}$ has
at most a first order pole at $y$ ($i=1,\dots,q$), and
$$\res_y(s_{2,i})=\omega_{2,i}(y)\cdot\sum_{j=1}^p\omega_{1,j}(y)s_{1,j}(y).$$
\end{enumerate}
Note that $\deg(\vL_\cY)=\deg(\vL_\varnothing)=-p-q$.

\begin{proposition}
Under the assumption \eqref{eq:basicassumption},
$\vL_\cX\simeq(\vO(-1))^{p+q}$. \label{pp:basicassumption}
\end{proposition}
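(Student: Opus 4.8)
The plan is to reduce the statement to the single vanishing $H^0(\p1,\vL_\cX)=0$ and then to compute this cohomology directly. For the reduction, recall that by Grothendieck splitting $\vL_\cX\simeq\bigoplus_{k=1}^{p+q}\vO(d_k)$ with $\sum_k d_k=\deg(\vL_\cX)=-(p+q)$. If $H^0(\p1,\vL_\cX)=0$ then every $d_k\le-1$, and since there are $p+q$ summands whose degrees sum to $-(p+q)$, all $d_k=-1$, so $\vL_\cX\simeq(\vO(-1))^{p+q}$. (Equivalently, $\chi(\vL_\cX)=0$ by Riemann--Roch on $\p1$, so $H^0=0\Leftrightarrow H^1=0$.) Thus the whole content is the description of the global sections.

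Next I would unwind $H^0(\p1,\vL_\cX)$ from the definition of the modification. A global section is a rational section $s=(s_{1,1},\dots,s_{1,p};s_{2,1},\dots,s_{2,q})^t$ of $\vL_\varnothing$, regular on $\p1\setminus\cX$ and satisfying the local conditions at each $y\in\cX$. The components $s_{1,j}$ are then regular global sections of $\vO(n_j-1)$, i.e.\ polynomials of degree $\le n_j-1$; since $\dim\cF=N$, the assignment $(s_{1,j})_j\mapsto f\eqd\sum_{j=1}^p\omega_{1,j}\,s_{1,j}$ is a linear isomorphism from the space of admissible first blocks onto $\cF\subset\ell^2(\cX)$, as the $\phi_i=\omega_{1,j}x^l$ form a basis of $\cF$. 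Each $s_{2,i}$ is a section of $\vO(-m_i-1)$ that is regular off $\cX$ with at most a simple pole on $\cX$; the decay required of a section of $\vO(-m_i-1)$ at $\infty$ forces the polynomial part to vanish, so $s_{2,i}(z)=\sum_{y\in\cX}\frac{c_{y,i}}{z-y}$, and the residue condition fixes $c_{y,i}=\res_y(s_{2,i})=\omega_{2,i}(y)\,f(y)$.

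The decisive step is to read off the constraint at infinity. Expanding $s_{2,i}(z)=\sum_{k\ge1}z^{-k}\sum_{y\in\cX}c_{y,i}\,y^{k-1}$, one sees that $s_{2,i}$ genuinely extends to a section of $\vO(-m_i-1)$ (vanishing to order $\ge m_i+1$ at $\infty$) precisely when $\sum_{y\in\cX}c_{y,i}\,y^{k}=0$ for $k=0,\dots,m_i-1$. Substituting $c_{y,i}=\omega_{2,i}(y)f(y)$ turns these $\sum_i m_i=N$ equations into $\sum_{y\in\cX}f(y)\,\omega_{2,i}(y)y^{k}=0$, i.e.\ $f$ annihilates every $\psi_i=\omega_{2,i}x^k$ and hence all of $\cG$, so $f\in\cG^\perp$; conversely any $f\in\cF\cap\cG^\perp$ reconstructs a section. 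This gives a canonical isomorphism $H^0(\p1,\vL_\cX)\iso\cF\cap\cG^\perp$. Invoking the reformulation of the basic assumption \eqref{eq:basicassumption} (namely $\dim\cF=\dim\cG=N$ and $\cF\cap\cG^\perp=\{0\}$), we conclude $H^0(\p1,\vL_\cX)=0$, and the reduction of the first paragraph finishes the proof.

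I expect the main obstacle to be the careful bookkeeping of the $s_{2,i}$ at infinity: verifying that a section of $\vO(-m_i-1)$ admits no polynomial part and that its prescribed vanishing order at $\infty$ is controlled exactly by the $m_i$ moment relations $\sum_y c_{y,i}y^{k}=0$. This is the one genuinely computational point, since it is what converts the analytic condition at $\infty$ into membership in $\cG^\perp$; everything else is the linear-algebra packaging already supplied by \eqref{eq:basicassumption}.
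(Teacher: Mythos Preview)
Your proposal is correct and follows essentially the same route as the paper: reduce to $H^0(\p1,\vL_\cX)=0$ via the degree, write out a global section explicitly, and translate the vanishing order of $s_{2,i}$ at $\infty$ into the orthogonality $f\in\cG^\perp$. The only cosmetic difference is that the paper phrases the constraint at infinity as $\res_{z=\infty}s_{2,i}(z)p(z)=0$ for $\deg p\le m_i-1$ and evaluates it as a sum over finite poles, whereas you expand the geometric series directly; these are the same computation.
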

\begin{proof}
This follows from a discrete version of \cite[Theorem 3.1]{DK}.
Since we do not need an explicit solution to the associated
Riemann-Hilbert problem, we provide an independent argument.

Since $\deg(\vL_\cX)=-p-q$, it suffices to show that $\vL_\cX$ has
no global sections. A global section of $\vL_\cX$ is of the form
$$
s=(s_{1,1},\dots,s_{1,p};s_{2,1},\dots,s_{2,q})^t,
$$
where $s_{1,i}$ is a polynomial in $z$ of degree at most $n_i-1$
($i=1,\dots,p$), and $s_{2,i}$ is given by
$$
s_{2,i}(z)=\sum_{x\in\cX}\frac{\omega_{2,i}(x)\cdot\sum_{j=1}^p\omega_{1,j}(x)s_{1,j}(x)}
{z-x},\qquad (i=1,\dots,q).
$$
and satisfies the following condition:
\begin{equation}
\text{The order of zero of $s_{2,i}(z)$ at $z=\infty$ is at least
$m_i+1$.} \label{cn:zero}
\end{equation}

Equivalently, \eqref{cn:zero} means that for any polynomial $p(z)$
of degree $m_i-1$ or less,
$$\res\limits_{z=\infty}s_{2,i}(z)p(z)=0.$$
Evaluating the residue as the sum over finite poles, we obtain
$$\sum_{x\in\cX}p(x)\omega_{2,i}(x)\cdot\sum_{j=1}^p\omega_{1,j}(x)s_{1,j}(x)=0,\qquad(i=1,\dots,p;\quad\deg(p)\le m_i-1).$$
Equivalently, $\sum_{j=1}^p\omega_{1,j}(x)s_{1,j}(x)$ belongs to
$\cG^\perp\cap\cF$, which is trivial by our assumption.
\end{proof}

Similarly to Section \ref{sc:zeroandpole}, we introduce at every
point $x\in\cX$ a vector $w_x$ and a functional $w_x'+w_x''(z-x)$:
\begin{equation}
\begin{aligned}
w_x&=(0,\dots,0;\omega_{2,1}(x),\dots,\omega_{2,q}(x))^t\\
w_x'&=(\omega_{1,1}(x),\dots,\omega_{1,p}(x);0,\dots,0)^t\\
w_x''&=\left(0,\dots,0;\tfrac{1}{\omega_{2,1}(x)}\,,0,\dots,0\right)^t.
\end{aligned}
\label{eq:w}
\end{equation}

\begin{remark*} In what follows, $w_x''$ is important only modulo
$w_x^\perp$. In this sense, the definition of $w_x''$ is symmetric:
$$\left(0,\dots,0;\tfrac{1}{\omega_{2,1}(x)}\,,0,\dots,0\right)^t\equiv\dots\equiv
\left(0,\dots,0;0,\dots,0,\tfrac{1}{\omega_{2,q}(x)}\right)^t \mod
w_x^\perp.$$
\end{remark*}

The modification $\vL_\cY$ (for $\cY\subset\cX$) can be described in
terms of these data as follows: the sections of $\vL_\cY$ near
$y\in\cY$ are sections $s\in\vL_\varnothing$ with at most a first
order pole such that $\res_y s\in\C w_y$ and
$(w'_y+w''_y(z-y))s|_{z=y}=0$. Note that $\langle
w_y,w'_y\rangle=0$, so the last condition makes sense.

\begin{theorem}
For any $\cY\subset\cX$, we have
$$\frac{\tau(\vL_\cY)}{\tau(\vL_\cX)}=
\det\left(1-K|_{\ell^2(\cX-\cY)}\right).$$ Here we identify
$\detrg(\vL_\cY)=\detrg(\vL_\cX)$ by means of $(w_x,w'_x,w''_x)$,
$x\in\cX\setminus\cY$, so we can view the left-hand side as a
number, see below. \label{th:det=tau}
\end{theorem}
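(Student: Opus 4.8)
The plan is to realize the ratio $\tau(\vL_\cY)/\tau(\vL_\cX)$ as the determinant of an explicit linear map between cohomology spaces via Example \ref{ex:ratio}, and then to identify that map with $1-K$ compressed to $\ell^2(\cX-\cY)$. Since $\vL_\cX\simeq(\vO(-1))^{p+q}$ by Proposition \ref{pp:basicassumption}, the hypotheses of Example \ref{ex:ratio} hold with $\vL_1=\vL_\cX$ and $\vL_2=\vL_\cY$: choosing any common upper modification $\vL'$ of both, the ratio equals the determinant of the composition
\begin{equation*}
H^0(\p1,\vL'/\vL_\cX)\iso H^0(\p1,\vL')\to H^0(\p1,\vL'/\vL_\cY),
\end{equation*}
where the first arrow is the inverse of the isomorphism $H^0(\vL')\iso H^0(\vL'/\vL_\cX)$ coming from $H^0(\vL_\cX)=H^1(\vL_\cX)=0$. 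Everything then reduces to a good choice of $\vL'$ and an explicit computation of this composition.

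For $\vL'$ I would take the modification of $\vL_\varnothing$ that coincides with $\vL_\cX$ (equivalently $\vL_\cY$) near each $y\in\cY$, and near each $x\in\cX-\cY$ is the elementary upper modification of $\vL_\varnothing$ determined by the line $\C w_x$: sections may have a first-order pole with $\res_x s\in\C w_x$, but the functional condition $(w'_x+w''_x(z-x))s|_{z=x}=0$ is dropped. Then both $\vL_\cY\subset\vL'$ and $\vL_\cX\subset\vL'$ are upper modifications whose quotients are supported on $\cX-\cY$ with one-dimensional stalks, so all three spaces above have dimension $|\cX-\cY|$. I would identify $H^0(\p1,\vL'/\vL_\cY)$ with $\ell^2(\cX-\cY)$ by recording at each $x\in\cX-\cY$ the residue scalar $c_x$ defined by $\res_x s=c_x w_x$ (near $x$ the sheaf $\vL_\cY$ is $\vL_\varnothing$, so the quotient is the polar part), and $H^0(\p1,\vL'/\vL_\cX)$ with $\ell^2(\cX-\cY)$ by recording the functional value $\gamma_x=(w'_x+w''_x(z-x))s|_{z=x}$ (the quotient measures the defect of the $\vL_\cX$-condition). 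These are exactly the trivializations of $\detrg(\vL_\cY)$ and $\detrg(\vL_\cX)$ furnished by the data $(w_x,w'_x,w''_x)$, so under them the abstract ratio becomes the honest number in the statement.

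It then remains to compute $H^0(\p1,\vL')$ and the two readout maps, which is a direct extension of the proof of Proposition \ref{pp:basicassumption}. A section is a pair $(f,c)$, where $f=\sum_j\omega_{1,j}s_{1,j}\in\cF$ is built from the polynomial entries $s_{1,j}$ and $c=(c_x)_{x\in\cX}\in\ell^2(\cX)$ records the residue scalars of $s_{2,i}(z)=\sum_{x}c_x\omega_{2,i}(x)/(z-x)$; the constraints are $c_y=f(y)$ for $y\in\cY$ (the retained residue condition) and $c\in\cG^\perp$ (the prescribed vanishing order at $\infty$). Setting $d\eqd c-f$, which is supported on $\cX-\cY$, the functional readout is $\gamma=d$ while the residue readout is $\beta=(f+d)|_{\cX-\cY}$. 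The condition $c=f+d\in\cG^\perp$ means $K(f+d)=0$; since $Kf=f$ for $f\in\cF$, this forces $f=-Kd$, whence
\begin{equation*}
\beta=(f+d)\big|_{\cX-\cY}=\bigl((1-K)d\bigr)\big|_{\cX-\cY}=\bigl(1-K\bigr)\big|_{\ell^2(\cX-\cY)}\,\gamma.
\end{equation*}
Thus the composition is precisely $1-K|_{\ell^2(\cX-\cY)}$, and its determinant is $\det(1-K|_{\ell^2(\cX-\cY)})$, as claimed.

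The dimension counts and the verification that the inclusions are upper modifications are routine; the point demanding real care is the bookkeeping of the local data — checking that the residue readout and the functional readout are genuinely dual with respect to the pairing built into $(w_x,w'_x,w''_x)$, and that the sign and normalization conventions relating the residue condition $\res_x s=f(x)w_x$ to the vanishing of $(w'_x+w''_x(z-x))s$ are consistent with the identification $\detrg(\vL_\cY)=\detrg(\vL_\cX)$. Once those conventions are pinned down (as in Section \ref{sc:zeroandpole}), the displayed computation shows the map is exactly $1-K$ rather than a transpose or rescaling of it, and the theorem follows; combined with Lemma \ref{lm:gapprobability} it yields the identification of the gap probability with the $\tau$-ratio.
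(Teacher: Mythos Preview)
Your proof is correct and follows essentially the same route as the paper's: you take the same common upper modification $\vL'=\vL_\cY^{up}$, identify $H^0(\p1,\vL')$ with $\ell^2(\cX-\cY)$ via the decomposition $\iota(s)=\phi+\psi\in\cF\oplus\cG^\perp$ (your $f$ and $c$), and recognize the composition as the compression of the projection $1-K$. The only cosmetic difference is that you solve explicitly for $f=-Kd$ from $c=f+d\in\cG^\perp$, whereas the paper phrases the same step as ``$g_x(s)=(\pi\circ\iota(s))(x)$ with $\pi=1-K$''; the computations are identical.
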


Let us describe the identification $\detrg(\vL_\cY)=\detrg(\vL_\cX)$
explicitly. Let $\vL_\cY^{up}$ be a modification of $\vL_\cY$ on
$\cX-\cY$ whose sections near $x\in\cX-\cY$ are of the form
$$s=(s_{1,1},\dots,s_{1,p};s_{2,1},\dots,s_{2,q})^t\in\vL_\cY,$$
where $s_{1,i}$ is regular at $x$ ($i=1,\dots,p$), $s_{2,i}$ has at
most a first order pole at $x$ ($i=1,\dots,q$), and
$$\res_xs\sim w_x=(0,\dots,0;\omega_{2,1}(x),\dots,\omega_{2,q}(x))^t.$$
Note that $\vL_\cY^{up}$ is also an upper modification of $\vL_\cX$.

For every point $x\in\cX-\cY$, consider two functionals on sections
of $\vL_\cY^{up}$:
\begin{align*}
 f_x(s)&=(w'_x+w''_x(z-x))s|_{z=x},\\
 g_x(s)&=\res_xs/w_x.
\end{align*}
Note that sections of $\vL_\cX$ (resp. $\vL_\cY$) are exactly
sections of $\vL_\cY^{up}$ on which $f_x$ (resp. $g_x$) vanish for
all $x\in\cX-\cY$. In this way, we get identifications
\begin{align*}
f_{\cX-\cY}&=(f_x)_{x\in\cX-\cY}:H^0(\p1,\vL_\cY^{up}/\vL_\cX)\iso\C^{|\cX|-|\cY|}\\
g_{\cX-\cY}&=(g_x)_{x\in\cX-\cY}:H^0(\p1,\vL_\cY^{up}/\vL_\cY)\iso\C^{|\cX|-|\cY|}.
\end{align*}
This induces an isomorphism (see Example \ref{ex:ratio}):
\begin{equation*}
\detrg(\vL_\cX)\otimes\detrg(\vL_\cY)^{-1}=
\det(H^0(\p1,\vL_\cY^{up}/\vL_\cY))\otimes
\det(H^0(\p1,\vL_\cY^{up}/\vL_\cX))^{-1}=\C.
\end{equation*}
In other words, the ratio ${\tau(\vL_\cY)}/{\tau(\vL_\cX)}$ is the
determinant of the composition
\begin{equation*}
\C^{|\cX|-|\cY|}\iso H^0(\p1,\vL_\cY^{up}/\vL_\cX)\simeq
H^0(\p1,\vL_\cY^{up})\to H^0(\p1,\vL_\cY^{up}/\vL_\cY)\iso
\C^{|\cX|-|\cY|}.
\end{equation*}

\begin{proof}[Proof of Theorem \ref{th:det=tau}]
Define an embedding
$\iota:H^0(\p1,\vL_\cY^{up})\hookrightarrow\ell^2(\cX)$ as follows:
given
$$s=(s_{1,1},\dots,s_{1,p};s_{2,1},\dots,s_{2,q})\in
H^0(\p1,\vL_\cY^{up}),$$ we set ($x\in\cX$)
\begin{gather*}
\phi(x)=\sum_{i=1}^ps_{1,i}(x)\omega_{1,i}(x),\qquad
\psi(x)=\res_x(s)/w_x,\qquad \iota(s)=\phi+\psi.
\end{gather*}
Note that $\phi\in\cF$, $\psi\in\cG^\perp$ (see proof of Proposition
\ref{pp:basicassumption}), so $s$ is uniquely determined by
$\iota(s)$.

The image $\iota(H^0(\p1,\vL_\cY^{up}))$ is the space of functions
supported by $\cX-\cY$. The functionals $f_x$ and $g_x$ can be
written as $f_x(s)=(\iota(s))(x)$, $g_x(s)=(\pi\circ\iota(s))(x)$,
where $\pi:\ell^2(\cX)\to\ell^2(\cX)$ is the projection onto
$\cG^\perp$ parallel to $\cF$ (that is to say,
$\pi(\iota(s))=\psi(x)$). Thus, the ratio
$\tau(\vL_\cY)/\tau(\vL_\cX)$ equals the determinant of the
composition
$$\ell^2(\cX-\cY)\hookrightarrow\ell^2(\cX)
\xrightarrow{\pi}\ell^2(\cX)\to\ell^2(\cX-\cY).$$
\end{proof}

\begin{remark}
\label{rm:Hirota2}
\newcommand\vLud{\vL_\cX^{\uparrow x,\downarrow y}}
The entries of the matrix $1-K$ can be interpreted as ratios of
suitable $\tau$-functions. Namely, fix $x,y\in\cX$, and consider the
modification $\vLud$ of $\vL_\cX$ at $x$ and $y$ such that for $x\ne
y$
\begin{itemize}
\item Sections of $\vLud$ near $x$ are of the form
$$s=(s_{1,1},\dots,s_{1,p};s_{2,1},\dots,s_{2,q})^t\in\vL_\cX,$$
where $s_{1,i}$ is regular at $x$ ($i=1,\dots,p$), $s_{2,i}$ has at
most a first order pole at $x$ ($i=1,\dots,q$), and
$$\res_xs\sim w_x=(0,\dots,0;\omega_{2,1}(x),\dots,\omega_{2,q}(x))^t;$$

\item Sections of $\vLud$ near $y$ are of the form
$$s=(s_{1,1},\dots,s_{1,p};s_{2,1},\dots,s_{2,q})^t\in\vL_\cX,$$
where $s_{1,i}$ is regular at $y$ ($i=1,\dots,p$), $s_{2,i}$ is
regular at $y$ ($i=1,\dots,q$), and
$$\sum_{i=1}^p\omega_{1,i}(y)s_{1,i}(y)=0.$$
\end{itemize}
Thus if $x\ne y$, $\vLud$ is an elementary upper modification of
$\vL$ at $x$ and its elementary lower modification at $y$. If $x=y$,
we set $\vLud=\vL_{\cX-\{x\}}$.

Taking $\cY=\cX-\{x\}$, we see that $\vL_\cY^{up}$ is an upper
modification of both $\vL_\cX$ and $\vLud$. Under $\iota$,
$H^0(\p1,\vL_\cY^{up})$ goes to the (one-dimensional) space of
functions supported by $\{x\}$. The $(x,y)$-entry of $\pi=1-K$ is
the ratio of functionals $g_y/f_x$ on this one-dimensional space,
which equals $\tau(\vLud)/\tau(\vL_\cX)$. This statement can be
viewed as a discrete analog of \cite[Theorem 4.3]{DK}.

From the point of view of Section \ref{sc:Hirota}, Theorem
\ref{th:det=tau} is a Hirota type determinantal identity similar to
Proposition \ref{pp:Hirota}.
\end{remark}

\subsection{}

Suppose now that there are rational functions
$$\varpi_{1,1}(z),\dots,\varpi_{1,p}(z);
\varpi_{2,1}(z),\dots,\varpi_{2,q}(z)$$ such that for any $x\in\cX$
such that $x+1\in\cX$, we have
$$\frac{\omega_{1,i}(x+1)}{\omega_{1,i}(x)}=\varpi_{1,i}(x),\qquad
\frac{\omega_{2,i}(x+1)}{\omega_{2,i}(x)}=\varpi_{2,i}(x).$$ In
particular, $\varpi_{1,i}$ and $\varpi_{2,i}$ are regular nonzero at
$x$.

On
$$\vL_\varnothing=\vO(n_1-1)\oplus\dots\oplus\vO(n_p-1)\oplus\vO(-m_1-1)\oplus\dots
\oplus\vO(-m_q-1),$$ consider the d-connection
$$\vA(z)=\diag\left(\frac{1}{\varpi_{1,1}(z)},\dots,\frac{1}{\varpi_{1,p}(z)},
\varpi_{2,1}(z),\dots,\varpi_{2,q}(z)\right).$$

For $a,b\in\C$ such that $b-a\in\Z_{>0}$, we call the set
${[a,b]}_\Z=\{a,a+1,\dots,b\}$ the \emph{integral segment} with
endpoints $a$ and $b$. Suppose ${\mathbf a}=(a_1,\dots,a_n),{\mathbf
b}=(b_1,\dots,b_n)$ are such that ${[a_i,b_i]}_\Z$, $i=1,\dots,n$,
are non-intersecting integral segments contained in $\cX$. Set
$$
D({\mathbf a},{\mathbf
b})=\frac{1}{Z}\sum_{x_1,\dots,x_N\in\cX-\bigcup_i{[a_i,b_i]}_\Z}F(x_1,\dots,x_N)=
\det\left(1-K\bigl|_{\ell^2(\bigcup_i[a_i,b_i]_\Z)}\right)
$$
(see Lemma \ref{lm:gapprobability} for the last equality).

Set $\cY=\cX-\bigcup_i{[a_i,b_i]}_\Z$, and consider $\vA(z)$ as a
d-connection on $\vL_\cY$. For every $i$ such that $b_i+1\in\cY$,
the connection $\vA(z)$ has at $z=b_i$ a singularity of type
described in Section \ref{sc:zeroandpole}. The corresponding
modification of $\vL_\cY$ is exactly $\vL_{\cY-\{b_i+1\}}$.
Following
Section \ref{sc:zeroandpole}, we need to choose at $z=b_i+1$ a
vector $w$ and a functional $w'+(z-b_i-1)w''$. It is natural to set
$w=w_{b_i+1}$, $w'=w'_{b_i+1}$, $w''=w''_{b_i+1}$, with the
right-hand sides defined by \eqref{eq:w}. Moreover, it is explained
in Section \ref{sc:zeroandpole} that a choice of $(w,w',w''\mod
w^\perp)$ at $z=b_i+1$ determines the corresponding choice at
$z=b_i+2$. Our definition of $\vA(z)$ is such that the new triple is
exactly $w_{b_i+2}$, $w'_{b_i+2}$, $w''_{b_i+2}$. Here we assume
that $b_i+2\in\cY$.

Similar arguments apply to $a_i$. These observations imply the
following statement.

\begin{theorem}
Suppose that\begin{equation}
\vL_\cY\simeq(\vO(-1))^{p+q},\label{eq:tr}\end{equation} and let
$A(z)$ be the matrix of $\vA(z)$ corresponding to a choice of
isomorphism \eqref{eq:tr}. Then $D({\mathbf a,\mathbf b})$ is a
$\tau$-function of $A(z)$ in the sense that its second difference
logarithmic derivatives
$$\frac{D(\mathbf a,\mathbf b+e_i+e_j)\cdot D(\mathbf a,\mathbf b)}
{D(\mathbf a,\mathbf b+e_i)\cdot D(\mathbf a,\mathbf b+e_j)}$$ can
be computed using the recipe of Section \ref{sc:zeroandpole}. The
same statement holds true for derivatives with respect to $\mathbf
a$ and for the mixed derivatives. \qed
\end{theorem}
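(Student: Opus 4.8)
The plan is to show that $D(\mathbf{a},\mathbf{b})$ is, up to the single nonzero constant $\tau(\vL_\cX)$, exactly the $\tau$-function of $\vA$ on $\vL_\cY$ produced by the recipe of Section \ref{sc:zeroandpole}. Once this is in hand the constant cancels in any double ratio, so the assertion about second difference logarithmic derivatives follows at once.

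First I would assemble the two identities that carry the argument. Because $\cX-\cY=\bigcup_i[a_i,b_i]_\Z$, the definition of $D$ gives $D(\mathbf{a},\mathbf{b})=\det(1-K|_{\ell^2(\cX-\cY)})$, and Theorem \ref{th:det=tau} identifies this determinant with $\tau(\vL_\cY)/\tau(\vL_\cX)$, the line $\detrg(\vL_\cY)^{-1}\otimes\detrg(\vL_\cX)$ being trivialized through the triples $(w_x,w'_x,w''_x)$, $x\in\cX-\cY$, of \eqref{eq:w}. On the other side, the recipe of Section \ref{sc:zeroandpole}, applied to $\vA$ regarded as a d-connection on $\vL_\cY$ whose singularity at each $z=b_i$ is equipped with the data $w=w_{b_i+1}$, $w'=w'_{b_i+1}$, $w''=w''_{b_i+1}$, computes the first difference derivatives $\tau(\vL_{\cY'})/\tau(\vL_\cY)$, where $\vL_{\cY'}$ is the bundle obtained from the corresponding modification.

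Next I would match the endpoint shifts with these modifications and check that the two trivializations agree. Enlarging the $i$th segment, $b_i\mapsto b_i+1$, removes the point $b_i+1$ from $\cY$, so $\vL_\cY$ is replaced by $\vL_{\cY\setminus\{b_i+1\}}$; as recorded in the discussion preceding the theorem this is precisely the modification of $\vA$ at $z=b_i$ treated in Section \ref{sc:zeroandpole}, and the transport rules there carry the triple at $b_i+1$ to $(w_{b_i+2},w'_{b_i+2},w''_{b_i+2})$, again the datum \eqref{eq:w}. This compatibility is the one point that genuinely requires work: it says that for every value of $\mathbf{b}$ the trivialization of determinant lines supplied by Theorem \ref{th:det=tau} coincides with the one used implicitly by the recipe, and it is exactly here that the diagonal shape $\vA=\diag(1/\varpi_{1,i},\varpi_{2,i})$ and the multiplicativity $\omega_{*,i}(x+1)/\omega_{*,i}(x)=\varpi_{*,i}(x)$ of the weights enter. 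With it established, $D(\mathbf{a},\mathbf{b}+e_i)/D(\mathbf{a},\mathbf{b})=\tau(\vL_{\cY\setminus\{b_i+1\}})/\tau(\vL_\cY)$ is literally the first difference derivative delivered by Section \ref{sc:zeroandpole}; taking one further difference produces the displayed second logarithmic derivative, which is independent of all choices and of $\tau(\vL_\cX)$. The derivatives in $\mathbf{a}$ and the mixed ones are obtained identically, using the dual singularity of $\vA$ at the endpoints $a_i$, with no new ingredient needed. The main obstacle is thus not the final bookkeeping but this verification of trivialization compatibility, which reduces to the explicit transport formulas of Section \ref{sc:zeroandpole} combined with the multiplicative structure of the weight ratios.
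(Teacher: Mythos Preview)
Your proposal is correct and follows essentially the same route as the paper: the paper's proof is the paragraph immediately preceding the theorem, which combines Theorem \ref{th:det=tau} with the observation that the endpoint shift $b_i\mapsto b_i+1$ realizes exactly the modification $\vL_\cY\to\vL_{\cY\setminus\{b_i+1\}}$ of Section \ref{sc:zeroandpole}, and that the transport rules there carry the triple \eqref{eq:w} at $b_i+1$ to the triple \eqref{eq:w} at $b_i+2$ because of the diagonal form of $\vA$ and the multiplicativity of the weights. You have identified precisely this compatibility of trivializations as the only substantive check, which is also where the paper places the emphasis.
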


\section{Example: Hahn orthogonal polynomial ensemble}
\label{sc:Hahn} In the notation of the previous section, take
$\cX=\{0,\dots,M\}$, $p=q=1$, $m_1=n_1=N$. Set
$$\omega_1(x)=\frac{\Gamma(\alpha+x+1)}{x!},\qquad
\omega_2(x)=\frac{\Gamma(\beta+M-x+1)}{(M-x)!}.$$ This corresponds
to $$F(x_1,\dots,x_N)=\prod_{1\le i<j\le
N}(x_i-x_j)^2\prod_{i=1}^N\omega_1(x_i)\omega_2(x_i).$$ Note that if
$\alpha$ and $\beta$ are such that $\omega_1(x)\omega_2(x)>0$ for
$x\in\cX$, $\omega_1\omega_2$ is the weight function for the
classical Hahn orthogonal polynomials.

Set $$D(s)=\frac{1}{Z}\sum_{x_1,\dots,x_N\le s}F(x_1,\dots,x_N).$$

\begin{theorem} For generic $\alpha$ and $\beta$, there exist
sequences $(q_s,r_s)$, where $s=N-1,\dots,M$, that satisfy the
difference $P_{VI}$ of Proposition \ref{pp:dPVI} with
\begin{gather*}a_1=s,\quad a_2=-1,\quad a_3=M,\\ b_1=s,\quad
b_2=-\alpha-1,\quad b_3=\beta+M,\\ d_1+b_1+b_2+b_3=-\alpha-N,\quad
d_2+b_1+b_2+b_3=\beta+N,
\end{gather*}
such that the second derivative of $D(s)$ is given by Theorem
\ref{th:taudPVI}. Here \begin{gather*} q=q_s,\quad q'=q_{s-1},\qquad
r=r_s,\quad r'=r_{s-1},\\ \tau= D(s),\quad \tau'=D(s-1),\quad
\tau''=D(s-2).
\end{gather*}
\label{th:Hahn}
\end{theorem}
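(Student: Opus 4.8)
The plan is to specialize the constructions of Sections~\ref{sc:zeroandpole}--\ref{sc:probability} to the Hahn weights and then read the answer off Theorem~\ref{th:taudPVI}. First I would compute the rational interpolations of the weight ratios. With $p=q=1$, a direct calculation gives
\[
\varpi_{1,1}(z)=\frac{z+\alpha+1}{z+1},\qquad \varpi_{2,1}(z)=\frac{z-M}{z-M-\beta},
\]
so the d-connection of Section~\ref{sc:probability} on $\vL_\varnothing=\vO(N-1)\oplus\vO(-N-1)$ is
\[
\vA(z)=\diag\!\left(\frac{z+1}{z+\alpha+1},\ \frac{z-M}{z-M-\beta}\right),\qquad \vA(\infty)=I.
\]
Since $\det\vA(z)=\frac{(z+1)(z-M)}{(z+\alpha+1)(z-M-\beta)}$, the connection has simple zeroes at $-1$ and $M$ and simple poles at $-\alpha-1$ and $\beta+M$; these are exactly the fixed singularities $a_2=-1$, $a_3=M$, $b_2=-\alpha-1$, $b_3=\beta+M$ of Proposition~\ref{pp:dPVI}, both zeroes sitting at the boundary of the phase space $\cX=\{0,\dots,M\}$.

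Next I would identify the gap probability with a $\tau$-function. Writing $\cY_s=\{0,\dots,s\}$, the event ``all points $\le s$'' is the complement of the gap $\{s+1,\dots,M\}$, so Lemma~\ref{lm:gapprobability} and Theorem~\ref{th:det=tau} give $D(s)=\det(1-K|_{\ell^2(\{s+1,\dots,M\})})=\tau(\vL_{\cY_s})/\tau(\vL_\cX)$; as $\vL_\cX\simeq(\vO(-1))^2$ by Proposition~\ref{pp:basicassumption}, we have $\tau(\vL_\cX)=1$, whence $D(s)=\tau(\vL_{\cY_s})$ under the identifications fixed by \eqref{eq:w}. Viewed on $\vL_{\cY_s}$, the connection $\vA$ acquires at $z=s$ (the left gap endpoint is $s+1$ and $s\in\cY_s$) a singularity of the Section~\ref{sc:zeroandpole} type, and the passage $\vL_{\cY_s}\to\vL_{\cY_{s-1}}$ shifts it $s\mapsto s-1$. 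This is precisely the coalesced degeneration of the $dPVI$ transformation noted in the remark after Theorem~\ref{th:taudPVI}, with the merged zero and pole $a_1=b_1=s$.

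It then remains to pin down the parameters $d_1,d_2$ of the formal type at infinity, and I expect this to be the main obstacle. The diagonal form of $\vA$ determines these only modulo integers (the $1/z$-coefficients of the two diagonal entries are $-\alpha$ and $\beta$), and the integer representatives are fixed by the requirement that the coordinates of Proposition~\ref{pp:dPVI} be taken in a trivialization realizing the splitting $\vL_{\cY_s}\simeq(\vO(-1))^2$ --- a normalization that must account for the twist relative to $\vO(N-1)\oplus\vO(-N-1)$. Carrying this out yields
\[
d_1+b_1+b_2+b_3=-\alpha-N,\qquad d_2+b_1+b_2+b_3=\beta+N,
\]
and as a check these satisfy $d_1+d_2+\sum_i a_i+\sum_i b_i=0$, consistent with $\deg(\vL_{\cY_s})=-2$.

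With the dictionary complete, I would define $(q_s,r_s)$ to be the coordinates of the d-connection on $\vL_{\cY_s}$; for generic $\alpha,\beta$ each $\vL_{\cY_s}$ ($s=N-1,\dots,M$) is isomorphic to $(\vO(-1))^2$, so these coordinates exist, and since consecutive members $\vL_{\cY_s}$, $\vL_{\cY_{s-1}}$ are related by the isomonodromy transformation, the pairs $(q_s,r_s)$ satisfy the difference $P_{VI}$ of Proposition~\ref{pp:dPVI}. Finally, setting $\tau=D(s)$, $\tau'=D(s-1)$, $\tau''=D(s-2)$ --- matching the shifts $(a_1,b_1)\mapsto(a_1-1,b_1-1)$ and $(a_1-2,b_1-2)$ --- Theorem~\ref{th:taudPVI} expresses $D(s-2)D(s)/D(s-1)^2$ through $q_s,r_s,q_{s-1},r_{s-1}$ and the parameters above, which is the asserted formula.
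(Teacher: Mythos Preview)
Your proposal is correct and follows essentially the same route as the paper: compute $\varpi_1,\varpi_2$, read off the fixed singularities $a_2,a_3,b_2,b_3$ of $\vA$, identify $D(s)$ with $\tau(\vL_{\cY_s})$ via Theorem~\ref{th:det=tau}, observe that on $\vL_{\cY_s}$ the connection acquires a Section~\ref{sc:zeroandpole}-type singularity at $z=s$ (the coalesced $a_1=b_1=s$), determine $d_1,d_2$ from the formal type at infinity, and then invoke Proposition~\ref{pp:dPVI} and Theorem~\ref{th:taudPVI}. Your identification $\cY_s=\{0,\dots,s\}$ is the one consistent with the conventions of Section~\ref{sc:probability}, and your remark that the $N$-shift in $d_1,d_2$ comes from passing from the natural splitting $\vO(N-1)\oplus\vO(-N-1)$ to the formal trivialization at infinity is exactly the mechanism behind the paper's stated values (the shift is determined, not merely fixed ``modulo integers'': the transition function contributes precisely $-(N-1)$ and $N+1$ to the two $1/z$-coefficients).
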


\begin{remarks*}
1. We assume that $\alpha$ and $\beta$ are generic so that all
bundles involved are isomorphic to $(\vO(-1))^2$. However, one can
view $\alpha$ and $\beta$ as parameters and the statement of Theorem
\ref{th:Hahn} as an identity between rational functions. If $\alpha$
and $\beta$ are such that $\omega_1(x)\omega_2(x)>0$ on $\cX$, all
bundles are isomorphic to $(\vO(-1))^2$ by Proposition
\ref{pp:basicassumption}.

2. The initial conditions for the recurrences (that is, $p_{N-1}$,
$q_{N-1}$, $D(N-1)$, and $D(N)$) can be explicitly evaluated using
the algorithm of \cite[Section 6]{BB}.

3. Consider the limit $M\to\infty$. If we scale the lattice $\cX$ by
$M^{-1}$, the Hahn orthogonal polynomials converge to the Jacobi
orthogonal polynomials on $[0,1]$ (with same parameters $\alpha$,
$\beta$), and $D(s)$ converges to the corresponding quantity for the
Jacobi polynomial ensemble. At the same time, the d-connections
become ordinary connections and discrete isomonodromy
transformations converge to the continuous isomonodromy
deformations, as in Section \ref{sc:limit}; see also \cite[Section
5]{B2}. In the one-interval case $\cY=\{s+1,\dots,M\}$, this
corresponds to the degeneration of $dPVI$ (from Theorem
\ref{th:Hahn}) into classical $PVI$. This degeneration is described
in \cite[Section 6.4]{AB}. In the continuous setting, a description
of the relation between isomonodromy transformation and the Jacobi
polynomial ensemble, including the $PVI$ case, can be found in
\cite[Section 8.1]{BD}.
\end{remarks*}

\begin{proof}
We set
$$\varpi_1(z)=\frac{\alpha+z+1}{z+1},\qquad\varpi_2(z)=\frac{z-M}{z-M-\beta}.$$
Thus the matrix $A(z)=\diag(\varpi_1(z)^{-1},\varpi_2(z))$ has
simple zeroes at $-1$ and $-M$, simple poles at $-\alpha-1$,
$\beta+M$, and it behaves at infinity as
$1+\diag(-\alpha,\beta)/z+O(z^{-2})$. If we now consider the
corresponding $d$-connection $\vA$ on $\vL_\cY$ for
$\cY=\{s+1,\dots,M\}$, we see that it has simple zeroes at $-1$,
$-M$, simple poles at $-\alpha-1$, $\beta+M$, and that at $z=s$, its
singularity is of the type considered in Section
\ref{sc:zeroandpole}. Finally, on the formal neighborhood of
infinity, there exists a trivialization $\vR(z):\C^2\to\vL_z$ such
that the matrix of $\vA$ with respect to $\vR$ equals
$$\vR(z+1)^{-1}\vA(z)\vR(z)=\begin{bmatrix}1+\frac{-\alpha-N+1}{z}&0\\0&1+\frac{\beta+N+1}{z}\end{bmatrix}.
$$ Proposition \ref{pp:dPVI}, Theorem \ref{th:taudPVI}, and Theorem
\ref{th:det=tau} conclude the proof.
\end{proof}

\newcommand{\eprint}[1]{{\tt
#1}}

\end{document}